\theoremstyle{definition}
\newtheorem{thm}{Theorem}[section]
\newtheorem{prop}[thm]{Proposition}
\newtheorem{rmk}[thm]{Remark}
\titleformat{\section}{\Large\bfseries}{\thesection}{.5em}{}
\def\ccc{\mathbb{C}}
\def\C{\mathbb{C}}
\def\rr{\mathbb{R}}
\def\pp{\mathbb{P}}
\def\pt{\partial}
\def\p{\partial}
\def\bpt{\bar{\pt}}
\def\ddb{\pt\bpt}
\def\bzeta{\bar{\zeta}}
\newcommand{\be}{\begin{equation}}
\newcommand{\bea}{\begin{eqnarray}}
\newcommand{\eea}{\end{eqnarray}}
\newcommand{\ee}{\end{equation}}
\begin{document}

\title{\textbf{The Anomaly flow over Riemann surfaces}}
\author{Teng Fei, Zhijie Huang and Sebastien Picard}

\date{}

\maketitle{}

\abstract{We initiate the study of a new nonlinear parabolic equation on a Riemann surface. The evolution equation arises as a reduction of the Anomaly flow on a fibration. We obtain a criterion for long-time existence for this flow, and give a range of initial data where a singularity forms in finite time, as well as a range of initial data where the solution exists for all time. A geometric interpretation of these results is given in terms of the Anomaly flow on a Calabi-Yau threefold.}

\tableofcontents

\section{Introduction}

Consider a Riemann surface $\Sigma$ and a holomorphic map $\varphi: \Sigma \rightarrow \C \pp^1$ such that $\varphi^* \mathcal{O}(2) = K_\Sigma$. We will call $(\Sigma, \varphi)$ a vanishing spinorial pair. By pulling back sections of $\mathcal{O}(2)$, we will construct three holomorphic $(1,0)$ forms $\mu_1$, $\mu_2$ and $\mu_3$ which can be associated to the pair $(\Sigma,\varphi)$. Given a metric $\omega$ on $\Sigma$, we may take the norm of $\mu_k$, which we will denote by $\| \mu_k \|^2_\omega$, and we will use the notation
\[
\| \mu \|^2_\omega = \| \mu_1 \|^2_\omega + \| \mu_2 \|^2_\omega + \| \mu_3 \|^2_\omega.
\]
For a fixed slope parameter $\alpha'>0$, we introduce the following conformal flow
\be \label{spinorial-pair-flow}
\frac{\p_t \omega(t)}{\omega(t)} = {1 \over \| \mu \|^2_{\omega(t)}} \left( -R_{\omega(t)} + \left| \p \log \| \mu \|_{\omega(t)}^2 \right|_{\omega(t)}^2 \right) + {\alpha' \over 8} \left( \Delta_{\omega(t)} \| \nabla \varphi \|^2_{\omega(t)} - \| \nabla \varphi \|^4_{\omega(t)}  \right).
\ee
The goal of this paper is to take the first steps in building the analytic theory for this equation.
\smallskip
\par We now discuss the motivation for studying the flow (\ref{spinorial-pair-flow}). Given a complex threefold $X$ with non-vanishing holomorphic $(3,0)$ form $\Omega$ and a metric $\omega_0$ satisfying the conformally balanced condition $d (\| \Omega \|_{\omega_0} \omega_0^2) = 0$, the Anomaly flow with trivial gauge bundle is given by
\be \label{af_general}
\p_t ( \| \Omega \|_{\omega} \omega^2 ) = i \ddb \omega - {\alpha' \over 4} {\rm Tr} \, Rm \wedge Rm, \ \ \omega(0) = \omega_0.
\ee
This flow was introduced by Phong, Zhang and the third-named author \cite{phong2017} to study the Hull-Strominger system \cite{hull1986b,strominger1986}. This flow preserves the conformally balanced condition $d (\| \Omega \|_{\omega(t)} \omega(t)^2) = 0$, making it interesting from the point of view of non-K\"ahler geometry, as it provides a deformation path in the space of balanced metrics. Short-time existence of this flow was established in \cite{phong2017} for initial metrics $\omega$ satisfying $|\alpha' Rm(\omega)| \ll 1$. The study of the Anomaly flow has just begun, but progress made in various directions \cite{phong2016b,phong2016c,phong2017b}.
\smallskip
\par Our motivation for studying such non-K\"ahler Calabi-Yau threefolds $X$ comes from theoretical physics. Candelas-Horowitz-Strominger-Witten \cite{candelas1985} proposed to use K\"ahler Calabi-Yau threefolds as torsion-free compactification of superstrings, bridging string theory with K\"ahler Ricci-flat metrics in complex geometry \cite{yau1978}. This idea was later extended by Hull-Strominger \cite{hull1986b,strominger1986} to allow superstrings with torsion, which leads to a system of equations in non-K\"ahler geometry with conformally balanced metrics. Examples of solutions to the Hull-Strominger system include \cite{li2005, fu2007,fu2008,fernandez2009, grantcharov2011, fei2015, fu2009, andreas2012, fernandez2014, fei2015d, halmagyi2016} and references therein. Stationary points of the Anomaly flow are solutions to the Hull-Strominger system.
\smallskip
\par In \cite{fei2017}, we gave a family of solutions to the Hull-Strominger system with infinitely many topological types. These threefolds were first constructed by the first-named author \cite{fei2015b,fei2016}, and they generalize a construction of Calabi \cite{calabi1958} and Gray \cite{gray1969}. Given a vanishing spinorial pair $(\Sigma,\varphi)$ together with a hyperk\"ahler manifold $M$, the generalized Calabi-Gray construction gives a non-K\"ahler threefold $X$ which is a fibration $p: X \rightarrow \Sigma$ with fiber $M$. In this paper, we study the Anomaly flow on generalized Calabi-Gray manifolds, and show that the flow (\ref{spinorial-pair-flow}) appears as a reduction of the Anomaly flow (\ref{af_general}) with the ansatz $\omega_f = e^{2f} \hat{\omega} + e^f \omega'$ for an arbitrary function $f \in C^\infty(\Sigma,\mathbb{R})$, where $\hat{\omega}$ is a canonical metric on $\Sigma$ constructed from $(\Sigma,\varphi)$ and $\omega'$ is positive in the fiber directions.
\smallskip
\par There is another setting where the Anomaly flow on a fibration reduces to a scalar equation on the base. In \cite{fu2007,fu2008}, Fu and Yau found the first compact non-K\"ahler solutions to the Hull-Strominger system by studying threefolds constructed by Calabi-Eckmann-Goldstein-Prokushkin \cite{calabi1953b,goldstein2004}. These threefolds are torus fibrations over a $K3$ surface. Fu and Yau wrote down an ansatz which allowed them to reduce the Hull-Strominger system to a fully nonlinear scalar PDE on the $K3$ surface. The equation discovered by Fu-Yau has a rich structure, and was further studied in \cite{phong2016d,phong2016a,phong2017c}. If we start the Anomaly flow with the Fu-Yau ansatz, the flow also descends to a parabolic equation on the base. In \cite{phong2016b}, this flow is studied when the initial data is taken such that $|\alpha' Rm|$ is initially small. We call this the large radius limit, as the ansatz involves a large multiple of the metric on the base. The condition $|\alpha' Rm| \ll 1$ is shown to be preserved along the flow, and the long-time existence of the flow is established. Our main result is the analogous statement in our setting.

\begin{thm} \label{af_collapse_thm}
Suppose $|\alpha' Rm(\omega_f)| \ll 1$, and start the Anomaly flow on a generalized Calabi-Gray manifold $p:X \rightarrow \Sigma$ with initial metric $\omega_f = e^{2f} \hat{\omega} + e^f \omega'$. Then the flow exists for all time and as $t \rightarrow \infty$,
\[
{\omega_f \over {1 \over 3!} \int_X \| \Omega \|_{\omega_f} \, \omega_f^3} \rightarrow p^* \omega_\Sigma,
\]
smoothly, where $\omega_\Sigma = q_1^2 \, \hat{\omega}$ is a smooth metric on $\Sigma$ associated to the vanishing spinorial pair $(\Sigma,\varphi)$. Here $q_1>0$ is the first eigenfunction of the operator $-\Delta_{\hat{\omega}} - \| \nabla \varphi \|^2_{\hat{\omega}}$. Furthermore, $(X,{\omega_f \over \frac{1}{3!} \int_X \| \Omega \|_{\omega_f} \, \omega_f^3})$ converges to $(\Sigma,\omega_\Sigma)$ in the Gromov-Hausdorff topology.
\end{thm}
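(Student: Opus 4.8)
The plan is to reduce the Anomaly flow \eqref{af_general} on $X$ to the scalar flow \eqref{spinorial-pair-flow} on $\Sigma$, establish long-time existence for the scalar flow under the smallness hypothesis, and then analyze the asymptotics. First I would substitute the ansatz $\omega_f = e^{2f}\hat\omega + e^f\omega'$ into \eqref{af_general}. Because $M$ is hyperk\"ahler and $\omega'$ is a fixed fiberwise metric, the terms $i\ddb\omega_f$ and $\tr Rm\wedge Rm$ should decompose into pieces involving only $\Sigma$-derivatives of $f$, the curvature of $\hat\omega$, and $\|\nabla\varphi\|^2$; the conformally balanced condition forces the flow to be purely conformal on $\Sigma$, so $\p_t\omega_f$ is determined by $\p_t f$. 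Matching the $\hat\omega$-component and the $\omega'$-component should yield exactly \eqref{spinorial-pair-flow} for the conformal factor (after absorbing the $\|\mu\|^2_\omega$ normalization, which records how $\|\Omega\|_\omega$ depends on $f$). This computation is mostly bookkeeping but must be done carefully to see the precise form of the lower-order terms; I would expect the condition $|\alpha' Rm(\omega_f)|\ll1$ to translate into a smallness condition on $\alpha'\|\nabla\varphi\|^2$ relative to the geometry, ensuring the scalar equation is uniformly parabolic.

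Next I would establish long-time existence for \eqref{spinorial-pair-flow}. The strategy is a maximum-principle argument: show that the smallness condition is preserved, so the equation stays uniformly parabolic, and then derive uniform a priori estimates. Writing $\omega(t) = e^{2u(t)}\omega_\Sigma$ or similar, the flow becomes a quasilinear parabolic equation for $u$; I would first obtain a $C^0$ bound by comparison with sub/supersolutions built from the eigenfunction $q_1$, then bootstrap to $C^\infty$ via parabolic Schauder/Krylov–Safonov theory once uniform parabolicity and $C^0$ bounds are in hand. The key monotone quantity should be tied to the spectrum of $-\Delta_{\hat\omega} - \|\nabla\varphi\|^2_{\hat\omega}$: since $q_1>0$ is the first eigenfunction with eigenvalue $\lambda_1$, one expects $\int_X\|\Omega\|_{\omega_f}\omega_f^3$ to evolve in a controlled (e.g. monotone up to scaling) way, and the normalized metric to be attracted to the fixed profile $q_1^2\hat\omega$.

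For the convergence statement, I would study the normalized flow $\tilde\omega = \omega_f / (\tfrac{1}{3!}\int_X\|\Omega\|_{\omega_f}\omega_f^3)$. The fixed points of the normalized scalar flow are precisely the solutions of $-R_\omega + |\p\log\|\mu\|^2_\omega|^2_\omega + \tfrac{\alpha'}{8}(\Delta\|\nabla\varphi\|^2 - \|\nabla\varphi\|^4) = c\,\|\mu\|^2_\omega$ for the appropriate constant $c$, and one checks that $\omega_\Sigma = q_1^2\hat\omega$ solves this (this is where the eigenvalue equation $-\Delta_{\hat\omega}q_1 - \|\nabla\varphi\|^2_{\hat\omega}q_1 = \lambda_1 q_1$ enters, after converting curvature and norm terms under the conformal change $\hat\omega\mapsto q_1^2\hat\omega$). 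With uniform $C^\infty$ bounds from the previous step, exponential convergence to this unique positive fixed profile follows by linearizing the flow at $\omega_\Sigma$: the linearized operator is a conjugate of $-\Delta_{\hat\omega}-\|\nabla\varphi\|^2_{\hat\omega}$ minus its first eigenvalue, hence nonnegative with one-dimensional kernel corresponding to the scaling, so the normalized flow contracts transversally. Smooth convergence $\tilde\omega\to p^*\omega_\Sigma$ then gives Gromov–Hausdorff convergence of $(X,\tilde\omega)$ to $(\Sigma,\omega_\Sigma)$ by the standard fact that the fibers, of diameter $O(e^{f}) \to 0$ relative to the base scale, collapse.

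The main obstacle I anticipate is not the structure of the argument but the a priori estimates for the scalar flow: establishing that $|\alpha' Rm|\ll1$ is genuinely preserved (this couples the zeroth-order $\|\nabla\varphi\|^4$ term with the second-order $\Delta\|\nabla\varphi\|^2$ term and with $R_\omega$, which itself depends on two derivatives of the conformal factor), and then obtaining uniform higher-order bounds uniform in $t$ for the normalized flow rather than just on finite time intervals. The fully nonlinear coupling — curvature $R_\omega$, the gradient term $|\p\log\|\mu\|^2_\omega|^2_\omega$, and the $\alpha'$ corrections all depending on $\omega(t)$ — means the parabolic maximum principle must be applied to a cleverly chosen test quantity rather than to $u$ directly; identifying that quantity, presumably something like $\|\nabla\varphi\|^2_{\omega(t)}$ or a weighted combination with the conformal factor, is the technical heart of the proof.
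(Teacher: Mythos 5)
Your high-level plan matches the paper's — reduce to a scalar equation on $\Sigma$, show the small-curvature hypothesis is preserved and yields long-time existence, then prove convergence of a suitable normalization — but the crucial ingredient that makes each of these steps go through is missing, and one of your intermediate claims is wrong.

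The missing idea is the substitution
\[
u = e^f + \frac{\alpha'}{2}\,\kappa\, e^{-f},
\]
where $\kappa = -\tfrac12\|\nabla\varphi\|^2_{\hat\omega}$ is the Gauss curvature of the reference metric. You correctly guess that the right test quantity is ``a weighted combination with the conformal factor,'' but without the explicit form the rest of the argument does not close. The point is that while the equation for $e^f$ (or for $f$) is genuinely nonlinear because of the $e^{-f}$ and $e^{-2f}$ terms, the function $u$ evolves by
\[
\partial_t u = a^{z\bar z} u_{\bar z z} - \bigl(\kappa\, a^{z\bar z}\hat g_{\bar z z}\bigr)\,u,\qquad a^{z\bar z} = \Bigl(1 - \tfrac{\alpha'}{2}\kappa e^{-2f}\Bigr)\hat g^{z\bar z},
\]
which is linear in $u$ with coefficients of definite sign (the zeroth-order coefficient is $\geq 0$ since $-\kappa\geq 0$). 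The curvature of $\omega_f$ scales like $e^{-2f}$, so $|\alpha'Rm(\omega_f)|\ll 1$ is equivalent to $e^{2f} \geq \tfrac{\alpha'}{2}(-\kappa)$, i.e.\ to $u\geq 0$. The weak maximum principle then preserves $u\geq 0$, the strong maximum principle upgrades this to $u>0$ for $t>0$, and $u>0$ bounds $e^{-f}$, which is exactly the continuation criterion of Theorem~\ref{long-time-criterion}. Without passing to $u$, you would have to fight the full quasilinear equation \eqref{f-dot} directly, and it is not clear how to make the preservation-of-smallness argument work.

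Your claim that one can ``first obtain a $C^0$ bound by comparison with sub/supersolutions built from $q_1$'' is not correct as stated: there is no uniform $C^0$ bound on the conformal factor. In fact the paper proves $C^{-1}e^{\eta t}\leq \int_\Sigma e^f \leq Ce^{\eta t}$ with $\eta=-\lambda_1/2>0$, so $e^f\to\infty$ exponentially, and the fibers collapse precisely because the normalization $\|e^f\|_{L^2}$ diverges. Only the normalized function $v = u/\|u\|_{L^2}$ admits uniform two-sided bounds, and obtaining these requires the local maximum principle and the weak Harnack inequality for the $u$-equation, applied chart-by-chart — not a sub/supersolution comparison.

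On convergence, your linearization argument (showing the normalized flow contracts transversally at the fixed profile $q_1^2\hat\omega$) is plausibly correct and would even yield an exponential rate, which the paper does not claim. But it only applies once you know the solution enters a neighborhood of the fixed point, which is the hard part. The paper instead establishes uniform $C^k$ bounds on $v$ (Krylov--Safonov plus Schauder, bootstrapped), shows the Dirichlet-type energy $I(v)=\tfrac12\int|\partial v|^2 + \tfrac12\int\kappa v^2$ is monotone up to an error $O(e^{-\eta t})$, deduces $\int_\Sigma\dot v^2\to 0$, and then identifies any subsequential limit as $q_1$ by passing to the limit in the evolution equation. Your linearization idea could be a genuinely alternative (and arguably sharper) endgame, but only after the uniform estimates and the ``entering a neighborhood'' step are supplied, neither of which your sketch addresses.
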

\par Thus initial metrics in our ansatz satisfying $|\alpha' Rm(\omega_f)| \ll 1$ at the initial time will collapse the hyperk\"ahler fibers under a normalization of the Anomaly flow. The metrics on the base will converge smoothly to a metric $\omega_{\Sigma}$ which can be associated to the vanishing spinorial pair $(\Sigma, \varphi)$.
\smallskip
\par Theorem \ref{af_collapse_thm} can be compared to the phenomenon of collapsing in the K\"ahler-Ricci flow, as pioneered by Song-Tian \cite{song2007,song2012} and further explored by several others \cite{tosatti2014,fong2015,song2016,tosatti2015e,gill2014,zhang2017}. In this case, there is a general theory of collapsing of Calabi-Yau fibrations over K\"ahler manifolds $B$. The limiting metric $\omega_B$ which appears on the base $B$ in the limit of the flow is a twisted K\"ahler-Einstein metric which solves ${\rm Ric}(\omega_B) = - \omega_B + \omega_{WP}$, where $\omega_{WP}$ is the Weil-Petersson metric of the fibration. Other flows in complex geometry, such as the Chern-Ricci flow and the conical K\"ahler-Ricci flow, also exhibit collapsing behavior \cite{tosatti2015d,fang2016,zheng2017,edwards2017}.
\smallskip
\par In our case, the limiting metric $\omega_\Sigma$ on the base $\Sigma$ has curvature
\[
- i \ddb \log \omega_\Sigma = - (2 \eta q_1^{-2}) \omega_\Sigma + \omega_{WP} + 2 i q_1^{-2} \p q_1 \wedge \bar{\p} q_1.
\]
Here $0<\eta = -\lambda_1/2$, where $\lambda_1$ is the first eigenvalue of the operator $-\Delta_{\hat{\omega}} - \| \nabla \varphi \|^2_{\hat{\omega}} $. Thus the curvature of $\omega_\Sigma$ splits into a negative part proportional to $\omega_\Sigma$ and a positive part which has contributions from the Weil-Petersson form $\omega_{WP} \geq 0$. The size of the first eigenvalue of $-\Delta_{\hat{\omega}}  - \| \nabla \varphi \|^2_{\hat{\omega}}$ was estimated by the first and second-named authors in \cite{fei2017b}.
\smallskip
\par Given this motivation, we now return to discussing the evolution equation (\ref{spinorial-pair-flow}) of a metric $\omega(t)$ on a Riemann surface. We will construct a reference metric $\hat{\omega} = i \hat{g}_{\bar{z} z} dz \wedge d \bar{z}$ on the pair $(\Sigma,\varphi)$ such that the conformal factor of $\omega(t) = e^{f(t)} \hat{\omega}$ evolves by
\be \label{flow-ef}
\p_t e^f =  \hat{g}^{z \bar{z}} \p_z \p_{\bar{z}} (e^f + {\alpha' \over 2}  \kappa e^{-f} ) - \kappa  (e^f + {\alpha' \over 2} \kappa e^{-f} ),
\ee
where $\kappa \in C^\infty(\Sigma,\mathbb{R})$ is a given function such that $\kappa \leq 0$. In fact, $\kappa$ is the Gauss curvature of the metric $\hat{\omega}$. We begin by noting that the equation is parabolic, and thus a solution exists for a short-time.

\begin{thm}
Given an initial smooth function $h: \Sigma \rightarrow \mathbb{R}$, there exists $T>0$ such that a smooth solution of the evolution equation (\ref{flow-ef}) exists on $[0,T)$ and $f(x,0) = h(x)$.
\end{thm}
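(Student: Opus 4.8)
The plan is to verify directly that (\ref{flow-ef}) is a quasilinear parabolic equation for $f$, and then to invoke the standard short-time existence theory for such equations on the compact surface $\Sigma$.

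First I would rewrite the equation so that $f$, rather than $e^f$, is the unknown. Set $\phi(f) = e^f + \tfrac{\alpha'}{2}\kappa\, e^{-f}$, so that (\ref{flow-ef}) reads $\p_t e^f = \hat{g}^{z\bar{z}}\p_z\p_{\bar{z}}\phi(f) - \kappa\,\phi(f)$. Expanding the derivatives by the chain rule, the second-order part of $\p_z\p_{\bar{z}}\phi(f)$ is $\phi'(f)\,\p_z\p_{\bar{z}}f$ with $\phi'(f) = e^f - \tfrac{\alpha'}{2}\kappa\, e^{-f}$, while the remaining terms involve at most one derivative of $f$ together with the fixed smooth data $\hat{g}$, $\kappa$ and their derivatives. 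Dividing through by $\p_t e^f = e^f\,\p_t f$, the equation (\ref{flow-ef}) becomes
\[
\p_t f = \hat{g}^{z\bar{z}}\Bigl(1 - \tfrac{\alpha'}{2}\kappa\, e^{-2f}\Bigr)\p_z\p_{\bar{z}}f + F\bigl(x,f,\nabla f\bigr),
\]
where $F$ is smooth in its arguments, of first order in $f$, and built only from the data of $(\Sigma,\varphi)$. This exhibits (\ref{flow-ef}) as a scalar quasilinear second-order equation on $\Sigma$.

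Next I would check parabolicity. Since $\hat{\omega}$ is a metric we have $\hat{g}^{z\bar{z}}>0$; since $\alpha'>0$ and $\kappa\le 0$ by hypothesis, we have $1 - \tfrac{\alpha'}{2}\kappa\, e^{-2f}\ge 1$ for every value of $f$. Hence the leading coefficient satisfies $\hat{g}^{z\bar{z}}\bigl(1 - \tfrac{\alpha'}{2}\kappa\, e^{-2f}\bigr)\ge \hat{g}^{z\bar{z}}>0$, so the operator on the right-hand side is strictly elliptic, uniformly for $f$ ranging in any bounded subset of $C^0(\Sigma)$. The essential point is precisely the sign condition $\kappa\le 0$, which forces parabolicity regardless of the size of $f$.

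With parabolicity established, short-time existence is a routine application of the standard theory. Fix $\gamma\in(0,1)$ and work in the parabolic H\"older spaces $C^{2+\gamma,\,1+\gamma/2}(\Sigma\times[0,T])$. One may freeze coefficients at the initial datum: the linearization of the right-hand side at $h$ is a strictly elliptic operator with smooth coefficients, the associated linear Cauchy problem is uniquely solvable, and the inverse function theorem (equivalently a contraction-mapping argument) then produces a solution of the nonlinear problem on a short interval $[0,T)$ with $f(\cdot,0)=h$; alternatively one may cite the local solvability theorem for quasilinear parabolic equations directly. Since $h$ and all geometric data are $C^\infty$, parabolic bootstrapping upgrades the solution to $C^\infty(\Sigma\times[0,T))$. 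I do not anticipate a serious obstacle; the only mild point is to ensure the leading coefficient does not degenerate, which is automatic because $e^{-2f}$ remains bounded on a sufficiently short time interval by continuity from the smooth initial datum $h$.
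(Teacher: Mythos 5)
Your argument is correct and coincides with the paper's own approach: the paper also rewrites (\ref{flow-ef}) as a quasilinear equation for $f$ with principal part $(1-\tfrac{\alpha'}{2}\kappa e^{-2f})\hat{g}^{z\bar{z}}\p_z\p_{\bar{z}}$, observes parabolicity from $\kappa\le 0$, and cites standard short-time existence (see equation (\ref{f-dot}) and the surrounding text). Your additional remark on uniform ellipticity for $f$ in bounded sets is the same observation, spelled out a bit more explicitly.
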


The question of long-time existence of solutions is the main focus of this work. We will see that the behavior of this evolution equation is sensitive to the balance of the size of $e^f$ against its reciprocal $e^{-f}$. Indeed, for initial data where $e^f$ is large initially, the term $e^f$ dominates and the flow exists for all time. However, for initial data where $e^f$ is small initially, the nonlinear reciprocal term $e^{-f}$ dominates and the flow develops a singularity in finite time.
\smallskip
\par Theorem \ref{af_collapse_thm} corresponds to studying the flow (\ref{flow-ef}) with large initial data $e^f(x,0)$. This result will be discussed as a pure PDE problem in \S \ref{collapsing-section}, and stated without geometric interpretation as Theorem \ref{u>0-conv}. The condition on the initial data can be viewed as a perturbative hypothesis, and the equation is shown to evolve almost linearly in this case. In terms of the formalism of equation (\ref{spinorial-pair-flow}), the statement of Theorem \ref{af_collapse_thm} becomes the following:

\begin{thm} \label{spinorial-pair-flow-thm}
Start the flow (\ref{spinorial-pair-flow}) with an initial metric satisfying $\omega(0) \geq \dfrac{\sqrt{\alpha'}}{2} \| \nabla \varphi \|_{\hat{\omega}} \, \hat{\omega}$. Then the flow exists for all time, and
\[ {\omega(t) \over \int_\Sigma \omega(t)} \rightarrow \frac{q_1\hat{\omega}}{\int_\Sigma q_1\hat{\omega}}\]
smoothly as $t \rightarrow \infty$. As before, $q_1>0$ is the first eigenfunction of the operator $-\Delta_{\hat{\omega}} - \| \nabla \varphi \|^2_{\hat{\omega}}$.
\end{thm}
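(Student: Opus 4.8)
The plan is to read Theorem~\ref{spinorial-pair-flow-thm} off the scalar flow (\ref{flow-ef}) and its PDE analysis (the content of Theorem~\ref{u>0-conv}, proved in \S\ref{collapsing-section}). Writing a solution of (\ref{spinorial-pair-flow}) as $\omega(t)=e^{f(t)}\hat\omega$ and setting $u:=e^{f}$, the conformal factor solves (\ref{flow-ef}), namely $\p_t u=\hat g^{z\bz}\p_z\p_{\bz}v-\kappa v$ with $v:=u+\tfrac{\alpha'}{2}\kappa u^{-1}$; this reduction, and the calibration of the reference metric $\hat\omega$ under which $\|\nabla\varphi\|^2_{\hat\omega}$ and $-\kappa$ are proportional, is available from the preceding sections. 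With that calibration the hypothesis $\omega(0)\geq\tfrac{\sqrt{\alpha'}}{2}\|\nabla\varphi\|_{\hat\omega}\hat\omega$ becomes exactly $u(\cdot,0)^{2}\geq-\tfrac{\alpha'}{2}\kappa$, i.e. $v(\cdot,0)\geq 0$. Granting the PDE statement that under this condition (\ref{flow-ef}) has a global solution with $e^{-\eta t}u(t)\to b_\infty q_1$ in $C^\infty$ for some $b_\infty>0$ and $\eta=-\lambda_1/2$, Theorem~\ref{spinorial-pair-flow-thm} is immediate: $\omega(t)=u(t)\hat\omega$, so $\omega(t)/\int_\Sigma\omega(t)=(e^{-\eta t}u(t))\hat\omega/\int_\Sigma(e^{-\eta t}u(t))\hat\omega\to q_1\hat\omega/\int_\Sigma q_1\hat\omega$ in $C^\infty$. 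It remains to outline the proof of the PDE statement.

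The organizing observation is that $\Phi(u):=u+\tfrac{\alpha'}{2}\kappa u^{-1}$ satisfies $\Phi'(u)=1-\tfrac{\alpha'}{2}\kappa u^{-2}\geq 1$ (using $\kappa\leq 0$), so $\Phi$ is an increasing diffeomorphism of $(0,\infty)$ onto its range and $v=\Phi(u)$ solves the quasilinear parabolic equation $\p_t v=a\big(\hat g^{z\bz}\p_z\p_{\bz}v-\kappa v\big)$ with $a=\Phi'(u)$; on the set $\{v\geq 0\}$, equivalently $\{u^{2}\geq-\tfrac{\alpha'}{2}\kappa\}$, this gives the two-sided ellipticity bound $1\leq a\leq 2$. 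I would first show that $\{v\geq 0\}$ is invariant: confining the maximum principle to times where $v\geq 0$ still holds (so that $a\leq 2$ and the zeroth-order coefficient $-a\kappa$ is bounded), an ODE-comparison for $\min_x v$ using $\hat g^{z\bz}\p_z\p_{\bz}v\geq 0$ at an interior spatial minimum and $-\kappa\geq 0$ forces $\min_x v(\cdot,t)\geq 0$, and the strong maximum principle gives $v>0$ for $t>0$ (unless $v(\cdot,0)\equiv 0$, the stationary case). Thus (\ref{flow-ef}) is uniformly parabolic on each $[\tau,T]$, $\tau>0$; pairing the lower bound with the $C^{0}$ upper bound from the spatially constant supersolution $(\max u_0)\,e^{2\|\kappa\|_{L^\infty} t}$, Krylov--Safonov and Schauder estimates followed by differentiation of the equation bound all derivatives of $u$ on compact intervals, and long-time existence follows by continuation.

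For the long-time behaviour I would exploit that, where $u$ is large, (\ref{flow-ef}) is a small perturbation of the linear flow $\p_t u=\hat g^{z\bz}\p_z\p_{\bz}u-\kappa u$, and the operator $\mathcal A:=\hat g^{z\bz}\p_z\p_{\bz}-\kappa$ is self-adjoint (the reference metric being calibrated precisely so that $\mathcal A$ is a positive multiple of $-(-\Delta_{\hat\omega}-\|\nabla\varphi\|^2_{\hat\omega})$), with top eigenvalue $\eta>0$, positive eigenfunction $q_1$, and a spectral gap below $\eta$. The engine is the sub-barrier $\underline v(x,t)=\epsilon e^{\eta t}q_1(x)$: since $\mathcal A q_1=\eta q_1$ and $a\geq 1$ it is a subsolution of the $v$-equation, whence $v(t)\geq c\,e^{\eta t}q_1$ for large $t$; as $u\geq v$, this forces $u(t)\to\infty$ uniformly, so $|a-1|=\tfrac{\alpha'}{2}|\kappa|u^{-2}=O(e^{-2\eta t})$. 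Feeding this back into the supersolution construction yields the matching bound $v(t)\leq C e^{\eta t}q_1$, so $V:=e^{-\eta t}v$ is pinched between positive constants and solves $\p_t V=(\mathcal A-\eta)V+O(e^{-2\eta t})$. An energy estimate for the component of $V$ orthogonal to $q_1$ uses the spectral gap to get exponential decay, while testing against $q_1$ shows the $q_1$-component converges to some $b_\infty>0$; the interior parabolic estimates upgrade this to $e^{-\eta t}v(t)\to b_\infty q_1$ in $C^\infty$, and because $\Phi(u)=v$ with $u\geq c e^{\eta t}$ forces $u=v+O(e^{-\eta t})$ in every $C^{k}$-norm, also $e^{-\eta t}u(t)\to b_\infty q_1$ in $C^\infty$, as needed.

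The main obstacle is closing the gap in the growth rate: the crude ellipticity bound $1\leq a\leq 2$ only sandwiches the exponential growth of $v$ between $e^{\eta t}$ and $e^{2\eta t}$, and it is exactly the use of the sub-barrier to prove $u\to\infty$ — and thereby to recover $a=1+O(e^{-2\eta t})$ — that collapses this to the single rate $e^{\eta t}$; once that is done the flow is a genuinely decaying perturbation of linear theory, in line with the perturbative (large-radius) picture described in the introduction. A second delicate point is that uniform parabolicity of (\ref{flow-ef}) — and hence every estimate above — hinges on the invariance of the cone $\{v\geq 0\}$ (equivalently $\omega(t)\geq\tfrac{\sqrt{\alpha'}}{2}\|\nabla\varphi\|_{\hat\omega}\hat\omega$), and because $a=\Phi'(u)$ blows up as $u\to 0$ the maximum-principle argument for this invariance must be set up with care, especially near the zeros of $\kappa$, where $v=0$ is compatible with $u=0$.
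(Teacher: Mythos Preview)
Your reduction to the scalar flow (\ref{flow-ef}) and the translation of the hypothesis $\omega(0)\geq\tfrac{\sqrt{\alpha'}}{2}\|\nabla\varphi\|_{\hat\omega}\hat\omega$ into the condition $u(\cdot,0)\geq 0$ (the paper's $u$; your $v$) match the paper exactly, as does the strong-maximum-principle argument for invariance of this region and the ensuing long-time existence via Theorem~\ref{long-time-criterion}. Where you diverge is in the long-time analysis of Theorem~\ref{u>0-conv}. To pin the normalization between positive constants, the paper first computes $\frac{d}{dt}\int_\Sigma e^f q_1$ to get $\int_\Sigma e^f\sim e^{\eta t}$, and then invokes the parabolic local maximum principle and weak Harnack inequality on unit time intervals, chained across a finite cover of $\Sigma$ (Propositions~\ref{int-u-comp}--\ref{u-vs-L2}); your eigenfunction sub-barrier $\epsilon e^{\eta t}q_1$, which is a subsolution of the $v$-equation precisely because $a\geq 1$, is a more direct route to the same lower bound and immediately forces $a-1=O(e^{-2\eta t})$, after which a matching supersolution closes the two-sided estimate without Harnack. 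For convergence the paper does not use the spectral gap: it shows the energy $I(v)=\frac12\int|\p v|^2+\frac12\int\kappa v^2$ of the $L^2$-normalization $v=u/\|u\|_{L^2}$ is almost-monotone with exponentially decaying error, deduces $\int_\Sigma\dot v^2\to 0$, and identifies any subsequential limit with $q_1$ from the limiting equation. Your spectral-gap energy estimate on the $q_1^\perp$-component of $V=e^{-\eta t}v$ is a genuinely different argument and buys an exponential rate of convergence, which the paper's method does not yield. Both routes, however, rest on the same uniform $C^k$ estimates for the normalized quantity via Krylov--Safonov and Schauder on unit time intervals (the content of Proposition~\ref{higher-est-v}); your outline needs this step to ensure $\mathcal A V$ is bounded before the spectral-gap estimate closes. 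One small correction: your concern that ``$v=0$ is compatible with $u=0$'' near the zeros of $\kappa$ is unfounded, since $u=e^f>0$ forces $v=u>0$ there; the paper uses exactly this observation (positivity at branch points of $\varphi$) to rule out $v\equiv 0$ when applying the strong maximum principle.
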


Without the large initial data hypothesis, we are in the non-perturbative regime, and the flow can behave very nonlinearly. We will show in Proposition \ref{finitesing} that finite time singularities occur for a certain range of initial data. However, the only possibility is that $e^f$ reaches zero in finite time, and we have the following long-time existence criterion.

\begin{thm} \label{long-time-criterion}
Suppose a solution to the evolution equation (\ref{flow-ef}) exists on a time interval $[0,T)$ with $T<\infty$. Then $e^f$ remains bounded on $[0,T)$. Furthermore, if $\sup_{\Sigma \times [0,T)} e^{-f} < \infty$, then the solution can be extended to an interval $[0,T+\epsilon)$ for some $\epsilon>0$.
\end{thm}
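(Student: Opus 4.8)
Write $u=e^f>0$ and set $v = u + \frac{\alpha'}{2}\kappa\,u^{-1}$, so that (\ref{flow-ef}) reads $\p_t u = \hat g^{z\bz}\p_z\p_{\bz} v - \kappa v$. Since $g(u):=u+\frac{\alpha'}{2}\kappa u^{-1}$ has $g'(u)=1-\frac{\alpha'}{2}\kappa u^{-2}\ge 1$ (as $\kappa\le 0$, $\alpha'>0$), the function $v$ itself evolves by the drift-free equation $\p_t v = g'(u)\big(\hat g^{z\bz}\p_z\p_{\bz}v-\kappa v\big)$. For the first claim I would run a maximum principle on $u$ directly: expanding $\hat g^{z\bz}\p_z\p_{\bz}(\kappa u^{-1})$ at a point where $u(\cdot,t)$ attains its spatial maximum (there $\p u=0$, $\hat g^{z\bz}\p_z\p_{\bz}u\le 0$, and the gradient cross terms drop out) gives
\[
\p_t u = \Big(1-\tfrac{\alpha'}{2}\kappa u^{-2}\Big)\hat g^{z\bz}\p_z\p_{\bz}u + \tfrac{\alpha'}{2}u^{-1}\hat g^{z\bz}\p_z\p_{\bz}\kappa - \kappa u - \tfrac{\alpha'}{2}\kappa^2 u^{-1}.
\]
The key point is that $\kappa\le 0$ makes the coefficient $1-\tfrac{\alpha'}{2}\kappa u^{-2}\ge 1$ positive, so that term is $\le 0$, as is $-\tfrac{\alpha'}{2}\kappa^2 u^{-1}$. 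Hence, by the usual argument for the evolution of a maximum, $W(t):=\max_\Sigma u(\cdot,t)$ obeys $\tfrac{d}{dt}W\le \tfrac{\alpha'}{2}\|\hat g^{z\bz}\p_z\p_{\bz}\kappa\|_{L^\infty}W^{-1}+\|\kappa\|_{L^\infty}W$, where the norms are finite because $\kappa$ is a fixed smooth function on compact $\Sigma$. With $Y=W^2$ this becomes $\tfrac{d}{dt}Y\le \alpha'\|\hat g^{z\bz}\p_z\p_{\bz}\kappa\|_{L^\infty}+2\|\kappa\|_{L^\infty}Y$, and Gr\"onwall then bounds $Y$, hence $e^f$, on the finite interval $[0,T)$.

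For the extension statement, assume moreover $e^{-f}\le\delta^{-1}$ on $\Sigma\times[0,T)$. Combined with the bound just obtained, $u$ stays in a fixed compact interval $[\delta,C]\subset(0,\infty)$, so $g'(u)=1-\tfrac{\alpha'}{2}\kappa u^{-2}$ is pinched between $1$ and $1+\tfrac{\alpha'}{2}\|\kappa\|_{L^\infty}\delta^{-2}$, and $v$ solves a uniformly parabolic equation in non-divergence form with bounded coefficients and no first-order term. The Krylov--Safonov estimate then yields a uniform parabolic $C^\beta$ bound for $v$ on $\Sigma\times[0,T)$ (using the smooth initial data to reach $t=0$); since $u=g^{-1}(v)$ with $g^{-1}$ smooth on a neighbourhood of the range of $v$, the coefficient $g'(u)$ is uniformly $C^\beta$, parabolic Schauder theory upgrades this to a uniform $C^{2,\beta}$ bound for $v$, and differentiating the equation and iterating gives uniform bounds on all derivatives of $v$, hence of $e^f$, on $\Sigma\times[0,T)$. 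Therefore $e^f(\cdot,t)$ converges in $C^\infty(\Sigma)$ as $t\to T$ to a positive smooth limit, and applying the short-time existence theorem from this limit and concatenating (using parabolic uniqueness to match the two solutions on the overlap) extends the flow to $[0,T+\epsilon)$.

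The maximum-principle step is short once the favorable sign $1-\tfrac{\alpha'}{2}\kappa u^{-2}\ge 1$ is spotted; I expect the main obstacle to be the uniform higher-order a priori estimates up to time $T$, since (\ref{flow-ef}) is genuinely quasilinear. The passage to $v$---which turns (\ref{flow-ef}) into an equation whose principal part is the fixed operator $\hat g^{z\bz}\p_z\p_{\bz}$ times the bounded scalar $g'(u)$, with no drift---is what makes the Krylov--Safonov/Schauder bootstrap go through, and it is precisely the hypothesis $e^{-f}<\infty$ that keeps this scalar, equivalently the ellipticity constant, under control.
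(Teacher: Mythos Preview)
Your proposal is correct, and for the extension step it is essentially the paper's argument: you pass to the auxiliary quantity $v=e^f+\tfrac{\alpha'}{2}\kappa e^{-f}$ (which the paper calls $u$; beware the notational clash), observe that it evolves by the uniformly parabolic equation $\p_t v = (1-\tfrac{\alpha'}{2}\kappa e^{-2f})(\hat g^{z\bz}v_{\bz z}-\kappa v)$ with ellipticity constant controlled precisely by the assumed bound on $e^{-f}$, then run Krylov--Safonov followed by Schauder and a bootstrap. One small point to make explicit: your map $g$ depends on $x$ through $\kappa$, so ``$u=g^{-1}(v)$'' really means the explicit formula $e^f=\tfrac12(v+\sqrt{v^2-2\alpha'\kappa})$, whose smoothness in $(v,x)$ is what you need (and is clear since $v^2-2\alpha'\kappa>0$ under your two-sided bound on $e^f$).

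For the first claim the paper proceeds differently: it applies the maximum principle to $e^{-\Lambda t}v$ (their $u$) with a case split on whether $e^f\le 1$ or $e^f\ge 1$ at the maximum point, obtains an upper bound on $v$, and then recovers the bound on $e^f$ from the algebraic relation $e^{2f}\le C e^f+\tfrac{\alpha'}{2}|\kappa|$. Your direct maximum principle on $e^f$ is a legitimate and somewhat cleaner alternative: you avoid the case analysis by exploiting that the dangerous term $-\kappa e^f$ is already linear in $e^f$ and the remaining nonlinear terms either have the right sign or scale like $(e^f)^{-1}$, so Gr\"onwall on $W^2$ closes immediately. The paper's route has the aesthetic advantage of working entirely with the ``good'' variable $v$, which is also the one used in the regularity bootstrap; yours buys a shorter argument for this particular bound.
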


For future developments, we will introduce in \S \ref{general-properties} an energy functional which is monotone decreasing along the flow, state a conservation law, and give a local estimate on $e^f$ which holds for arbitrary initial data. As this nonlinear evolution equation arises naturally from geometry and mathematical physics, we hope to continue its study and further develop the relevant analytic tools and techniques.
\smallskip
\par The outline of the paper is as follows. In \S \ref{gcg-af}, we recall the generalized Calabi-Gray construction and establish the geometric setup for our fibration $p: X \rightarrow \Sigma$, and derive the evolution equation (\ref{flow-ef}). In \S \ref{general-properties}, we prove the long-time existence criterion (Theorem \ref{long-time-criterion}) and discuss general features of the evolution equation. In \S \ref{collapsing-section}, we obtain growth estimates and study the limiting behavior of solutions in the case of large initial data. These analytic results are then interpreted geometrically to obtain Theorem \ref{af_collapse_thm}. Finally, in \S \ref{future-dir}, we mention some questions and further directions related to this work.
\medskip
\par \noindent\textbf{Acknowledgements:} We would like to thank Prof. D.H. Phong and Prof. S.-T. Yau for their guidance and support. We also thank Pei-Ken Hung and Xiangwen Zhang for helpful discussions. The third-named author was supported in part by National Science Foundation grant DMS-12-66033.

\section{Anomaly flow on generalized Calabi-Gray manifolds} \label{gcg-af}

\subsection{Setup and notation} \label{gcg-setup}

In this section, we review the construction of generalized Calabi-Gray manifolds \cite{fei2016, fei2016b}. We give the outline of the construction to establish notation.
\smallskip
\par Let $\Sigma$ be a Riemann surface and $\varphi: \Sigma \rightarrow \C \pp^1$ be a holomorphic map such that $\varphi^* \mathcal{O}(2) = K_\Sigma$. We will use coordinates $\zeta=z_2/z_1$ and $\eta = z_1/z_2$ on $\C\pp^1$. We write $\varphi=(\alpha,\beta,\gamma)$, which using the stereographic projection can written as
\begin{equation}\label{stereographic}
(\alpha,\beta,\gamma)= \left(\frac{1-|\zeta|^2}{1+|\zeta|^2},\frac{\zeta+\bzeta}{1+|\zeta|^2},\frac{i(\bzeta-\zeta)}{1+|\zeta|^2}\right).
\end{equation}
We will use the sections $\phi_1$, $\phi_2$ and $\phi_3$ of $\mathcal{O}(2) = T \C \pp^1$ defined by
\[\begin{split}\phi_1&=\zeta\frac{\p}{\p\zeta}=-\eta\frac{\p}{\p\eta},\\ \phi_2&=\frac{1}{2}(\zeta^2-1)\frac{\p}{\p\zeta}=\frac{1}{2}(\eta^2-1)\frac{\p}{\p\eta},\\ \phi_3&=-\frac{i}{2}(\zeta^2+1)\frac{\p}{\p\zeta}=\frac{i}{2}(\eta^2+1)\frac{\p}{\p\eta}.\end{split}\]
Let $\mu_1 = \varphi^* \phi_1$, $\mu_2 = \varphi^* \phi_2$ and $\mu_3 = \varphi^* \phi_3$ be holomorphic 1-forms on $\Sigma$. The expression
\[\widehat{\omega}=i(\mu_1\wedge \overline{\mu_1} +\mu_2\wedge \overline{\mu_2} +\mu_3\wedge \overline{\mu_3})\]
defines a K\"ahler metric on $\Sigma$. This metric is canonical in the sense that it is a generalization of the Weierstrass representation of minimal surfaces in $\rr^3$. We rescale $\hat{\omega}$ such that $\int_\Sigma \hat{\omega} = 1$ and we will fix $\hat{\omega}$ as our background metric from now on. We will use the notation
\[
\hat{\omega} = i \hat{g}_{\bar{z} z} \, dz \wedge d \bar{z}, \ \ \hat{g}^{z \bar{z}} = \hat{g}_{\bar{z} z}^{-1}, \ \ \kappa  = - \hat{g}^{z \bar{z}} \p_z \p_{\bar{z}} \log \hat{g}_{\bar{z} z}.
\]
We denote $|\p f|^2 = \hat{g}^{i \bar{j}} \p_i f \p_{\bar{j}} f$ for a function $f \in C^\infty(\Sigma, \mathbb{R})$, and $\Delta_{\hat{\omega}}$ will denote the real Laplacian of the metric $\hat{\omega}$, so that $2 i \ddb u = \Delta_{\hat{\omega}} u \, \hat{\omega}$. The Fubini-Study metric on $\C\pp^1$ will be denoted
\[
\omega_{FS} = {2 \over (1+|\zeta|^2)^2} i d \zeta \wedge d \bar{\zeta}.
\]
A standard computation (see e.g. \cite{fei2017}) gives the following identities
\be \label{kappa-identity}
- \varphi^* \omega_{FS} = \kappa \, \hat{\omega} = - { \| \nabla \varphi \|_{\hat{\omega}}^2 \over 2} \hat{\omega},
\ee
where $\| \nabla \varphi \|_\omega^2 = 2 (g_{FS}{}_{\bar{\zeta} \zeta} \circ \varphi) g^{z \bar{z}} \varphi_z \, \overline{\varphi_z}$ for a metric $\omega = i g_{\bar{z} z} dz \wedge d \bar{z}$, and
\be \label{abc-indakernel}
(\hat{g}^{z \bar{z}} \p_z \p_{\bar{z}} - \kappa) \alpha = (\hat{g}^{z \bar{z}} \p_z \p_{\bar{z}} - \kappa) \beta = (\hat{g}^{z \bar{z}} \p_z \p_{\bar{z}} - \kappa) \gamma = 0.
\ee
Next, we introduce the 4-torus with flat metric $(T^4,g)$, which we view as a compact hyperk\"ahler 4-manifold with complex structures $I$, $J$ and $K$ satisfying $I^2=J^2=K^2=-1$ and $IJ=K$. Write $\omega_I=g(I \cdot, \cdot)$, $\omega_J=g(J \cdot, \cdot)$ and $\omega_K=g(K \cdot, \cdot)$ for the corresponding K\"ahler forms. For any point $(\alpha,\beta,\gamma) \in S^2$, we have a compatible complex structure $\alpha I+\beta J+\gamma K$ on $T^4$, whose associated K\"ahler form is given by $\alpha\omega_I+\beta\omega_J+\gamma\omega_K$. Let $X= \Sigma \times T^4$ be the product manifold with a twisted complex structure $J_0 = j_\Sigma \oplus(\alpha I+\beta J+\gamma K)$, where we now let $\varphi = (\alpha,\beta,\gamma)$ as before. We can view the threefold $X$ as the pullback of the twistor space of $T^4$ \cite{hitchin1987}.
\medskip
\par A result in \cite{fei2015b,fei2016} shows that $X$ is non-K\"ahler with trivial canonical bundle, with trivializing holomorphic $(3,0)$ form $\Omega$ given by
\[ \Omega = \mu_1 \wedge \omega_I + \mu_2 \wedge \omega_J + \mu_3 \wedge \omega_K. \]
For future use, we compute the Weil-Petersson metric $\omega_{WP}$ of the fibration $X \rightarrow \Sigma$. We use the notation $N_y$ for the hyperk\"ahler fiber over $y \in \Sigma$. For a local family $\Psi_y \in H^0(N_y,K_{N_y})$ of non-vanishing sections varying holomorphically in $y$, we define
\[ \omega_{WP} = - i \ddb \log \left( \int_{N_y} \Psi_y \wedge \bar{\Psi}_y \right). \]
This definition gives a well-defined metric (see e.g. \cite{tosatti2015c}). In our case, we may take $\Psi_y$ to be $\Omega(\p_y, \cdot, \cdot)$ for a local coordinate $y$ on $\Sigma$ such that $\varphi^* \p_\zeta = dy$. In general, for an arbitrary coordinate $z$ we have $\varphi^* \p_\zeta = f(z) dz$ where $f$ is holomorphic and nowhere vanishing, and we let $y=F(z)$ where $F'(z)=f(z)$. Then
\[ \Psi_y = \varphi(y) \, \omega_I + {1 \over 2}(\varphi^2(y)-1) \omega_J - {i \over 2} (\varphi^2(y) +1) \omega_K, \]
and $\Psi_y \wedge \bar{\Psi}_y = (1+|\varphi|^2)^2 \, {\rm vol}_{T^4}$. It follows that $\omega_{WP} = \varphi^* \omega_{FS}$.

\subsection{The Anomaly flow}

Our ansatz on the threefold $X$ will be given by
\[
\omega_f = e^{2f} \hat{\omega} + e^f \omega', \ \ \omega' = \alpha\omega_I+\beta\omega_J+\gamma\omega_K,
\]
where $f \in C^\infty(\Sigma,\mathbb{R})$. It was computed in previous work \cite{fei2015d,fei2016} that
\be \label{balanced-class}
\| \Omega \|_{\omega_f} = e^{-2f}, \ \ \| \Omega \|_{\omega_f} \, \omega_f^2 = 2 {\rm vol}_{T^4} + 2 e^f \hat{\omega} \wedge \omega',
\ee
and
\be \label{volume-omega_f}
\int_X \| \Omega \|_{\omega_f} \omega_f^3 = \int_X e^{2f} \hat{\omega} \wedge (6 {\rm vol}_{T^4}).
\ee
Furthermore, $d (\| \Omega \|_{\omega_f} \, \omega_f^2) = 0$ for any arbitrary function $f$. Next, we introduce $u \in C^\infty(\Sigma,\mathbb{R})$ to be defined by
\be \label{u-defn}
u = e^f + {\alpha' \over 2} e^{-f} \kappa.
\ee
Solving for $e^f$ gives
\[e^f=\frac{1}{2}(u+\sqrt{u^2-2\alpha'\kappa})>0. \]
In \cite{fei2017} (see also \cite{fei2015d,fei2016}), the following identity was derived
\[
i \ddb \omega - {\alpha' \over 4} \, {\rm Tr} \, Rm(\omega_f) \wedge Rm(\omega_f) = (i \ddb u - \kappa u \hat{\omega}) \wedge \omega'.
\]
Therefore, the Anomaly flow (\ref{af_general}) reduces to
\[
(\p_t e^f  \hat{\omega}) \wedge \omega' = {1 \over 2} (i \ddb u - \kappa u \hat{\omega}) \wedge \omega'.
\]
From here, we obtain equation (\ref{flow-ef}) for a scalar $e^f$ on the base $\Sigma$, after rescaling $e^f(x,s) = e^f(x,2t)$ to remove the factor of a half. To obtain the evolution of $\omega(t) = e^{f(t)} \hat{\omega}$ given in (\ref{spinorial-pair-flow}), it suffices to use the identities $\| \mu \|^2_{\omega(t)} = e^{-f}$, $\| \nabla \varphi \|^2_{\omega(t)} = - 2 \kappa e^{-f}$, and
\[
R_{\omega(t)} =  -e^{-f} \hat{g}^{z \bar{z}} f_{\bar{z} z} + e^{-f} \kappa, \ \ \ \ |\p \log \| \mu \|_{\omega(t)}^2 |_{\omega(t)}^2 = e^{-f} \hat{g}^{z \bar{z}} f_z f_{\bar{z}} .
\]
The function $u(t)$ will play an important role in the analysis of this reduction of the Anomaly flow. Its evolution is given by
\be \label{flow-u}
\p_t u = (1 - {\alpha' \over 2} \kappa e^{-2f})  \hat{g}^{z \bar{z}} u_{\bar{z} z}  - \kappa (1 - {\alpha' \over 2} \kappa e^{-2f}) u .
\ee
We will also use the notation
\be \label{flow-u-2}
\p_t u = a^{z \bar{z}} u_{\bar{z} z} - (\kappa a^{z \bar{z}} \hat{g}_{\bar{z} z}) \, u, \ \ a^{z \bar{z}} = (1 - {\alpha' \over 2} \kappa e^{-2f}) \hat{g}^{z \bar{z}}.
\ee

\section{General properties of the evolution equation}
\label{general-properties}
Recall that we work on a Riemann surface $(\Sigma, \hat{\omega})$ equipped with a map $\varphi: \Sigma \rightarrow \ccc\pp^1$, where the curvature of the reference metric $\hat{\omega}$ is denoted $\kappa$ and satisfies $\kappa = - \frac{1}{2} \| \nabla \varphi \|^2_{\hat{\omega}}$. In particular $\kappa \leq 0$, and $\kappa =0$ at the branch points of $\varphi$. In this section, we will discuss some basic properties of the evolution equation (\ref{flow-ef}) with fixed $\alpha'>0$. This equation is parabolic since it can be written as
\bea \label{f-dot}
\left(\p_t - (1 - \frac{\alpha'}{2} \kappa e^{-2f}) \hat{g}^{z \bar{z}} \p_z \p_{\bar{z}} \right) f &=& (1 + {\alpha' \over 2} \kappa e^{-2f} ) |\p f|^2 - \alpha' e^{-2f} {\rm Re} \{ \hat{g}^{z \bar{z}} \p_z  \kappa \p_{\bar{z}} f \}  \nonumber\\
&& - \kappa - {\alpha' \over 2} e^{-2f}  ( \kappa^2 - \hat{g}^{z \bar{z}} \kappa_{\bar{z} z} ),
\eea
hence we may assume a solution exists on $[0,T)$ for some $T>0$, for any smooth initial data $f(x,0)$.

\subsection{A criterion for extending the flow}
First, we are going to prove the second part of Theorem \ref{long-time-criterion}, which states that as long as $e^{-f}$ stays bounded, the flow can be continued.
\smallskip
\par
Indeed, if we look at the evolution equation of $u = e^f + \dfrac{\alpha'}{2} e^{-f}\kappa$ given in (\ref{flow-u-2}), we see that if $\sup_{\Sigma \times [0,T)} e^{-f}  < \infty$, there exists $\Lambda > 0$ such that
\[
\hat{g}^{z \bar{z}} \leq a^{z \bar{z}} \leq \Lambda \hat{g}^{z \bar{z}} \ \  \mbox{and} \ \ 0 \leq (-\kappa a^{z \bar{z}} \hat{g}_{\bar{z} z}) \leq \Lambda.
\]
Thus the evolution equation of $u$ is uniformly parabolic on $[0,T)$. By applying the maximum principle to $e^{\Lambda t} u$, we see that $u(x,t) \leq e^{\Lambda t} u(x,0)$. Since $e^{-f}$ and $u$ are both bounded above, we conclude that $e^f$ is bounded, and so $\| u \|_{L^\infty(\Sigma \times [0,T))} < \infty$ and $\| f \|_{L^\infty(\Sigma \times [0,T))} < \infty$. By the Krylov-Safonov estimate \cite{krylov1980,krylov1981}, for some $0<\alpha<1$ we have
	$$||u||_{C^{\alpha, \alpha/2}} < \infty$$
	which in turn implies that $e^f , a^{z \bar z} \in C^{\alpha, \alpha/2}$. By the parabolic Schauder estimates \cite{krylov1996}, we get that $$||u||_{C^{2+\alpha, 1+\alpha/2}} < \infty.$$
A bootstrap argument gives higher order estimates. Hence we can find a subsequence $t_i \to T$ such that
	$ u(\cdot, t_i) \to u_T $ and $e^{f(\cdot, t_i)} \to e^{f_T}$ for some function $e^{f_T}$ and $u_T$. Since the convergence is at least in $L^\infty$, we know that $u_T = e^{f_T} + \frac{\alpha'}{2} e^{-f_T} \kappa$ still holds.
	Now we can continue the flow of $u$ using the initial data $u_T$. To solve for $e^f$ from $u$, the only condition we need is $u>0$ wherever $\kappa = 0$. This condition is satisfied at $t=T$ since $e^{f_T} \geq \delta > 0$, and it is an open condition, so it must be also satisfied for some small $\varepsilon > 0$. This proves that the flow can be extended past $T$.
\smallskip
\par To complete the proof of Theorem \ref{long-time-criterion}, we will show that $e^f$ cannot go to infinity in finite time. Let $\Lambda> |\kappa| + \dfrac{\alpha'}{2} \kappa^2$, and consider $e^{-\Lambda t} u$. Then
\[ (\p_t - a^{z \bar{z}} \p_z \p_{\bar{z}}) e^{-\Lambda t} u = (-\Lambda - \kappa + \frac{\alpha'}{2} \kappa^2 e^{-2f} )e^{-\Lambda t} u. \]
Suppose $e^{-\Lambda t} u$ attains its maximum on $\Sigma \times [0,T]$ at $(p,t_0)$ with $t_0>0$ and $u(p,t_0)>0$. If $e^{f(p,t_0)} \leq 1$, then since $u \leq e^f$ we have
\[u(x,t) \leq u(p,t_0)e^{\Lambda(t-t_0)} \leq e^{\Lambda t}.\]
for all $(x,t) \in \Sigma \times [0,T]$. On the other hand, suppose $e^{f(p,t_0)} \geq 1$. Then at $(p,t_0)$, we have the inequality
\[ (\p_t - a^{z \bar{z}} \p_z \p_{\bar{z}}) e^{-\Lambda t} u \leq (-\Lambda - \kappa + \frac{\alpha'}{2} \kappa^2)e^{-\Lambda t} u < 0, \]
which is a contradiction to the maximum principle. It follows that
\[ e^{-\Lambda t} u \leq 1+\| u(x,0) \|_{L^\infty(\Sigma)}. \]
Hence $u$ is bounded above on finite time intervals, and since $u \leq C$ implies $e^f \leq C + \dfrac{\alpha'}{2}|\kappa| e^{-f}$, we see that $e^f$ must also be bounded on finite time intervals.

\subsection{Monotonicity of energy}

We first note that we will often omit the background volume form $\hat{\omega}$ when integrating scalars. Define the energy of $u$ to be
$$I(u) = \frac{1}{2} \int_{\Sigma} |\p u|^2 + \frac{1}{2} \int_{\Sigma}\kappa u^2. $$
\begin{prop}
	Along the flow (\ref{flow-u}), the energy $I(u)$ is monotone non-increasing.
\end{prop}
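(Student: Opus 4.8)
Throughout, fix a smooth solution $u$ of (\ref{flow-u}) on some interval $[0,T)$. The plan is to differentiate $I(u)$ in time, integrate by parts once, and use that the evolution equation for $u$ is a (degenerate) gradient flow for $I$. First I would rewrite (\ref{flow-u}) in the compact form
\[
\p_t u = \Phi \cdot \mathcal{L} u, \qquad \mathcal{L} u := \hat{g}^{z \bar{z}} u_{\bar{z} z} - \kappa u, \qquad \Phi := 1 - \frac{\alpha'}{2}\, \kappa e^{-2f},
\]
which is precisely (\ref{flow-u-2}). The single elementary point to record is that $\Phi \ge 1 > 0$ pointwise: this uses $\kappa \le 0$ and $\alpha' > 0$, and it is the only place the standing hypothesis $\kappa \le 0$ is needed.

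Next I would compute $\frac{d}{dt} I(u)$ term by term, omitting $\hat{\omega}$ from the integrals as in the paper's convention and using that $\Sigma$ is a closed Riemann surface, so that integration by parts produces no boundary contribution. Writing $|\p u|^2 = \hat{g}^{z\bar{z}} u_z u_{\bar{z}}$ and differentiating under the integral,
\[
\frac{d}{dt}\,\frac{1}{2}\int_\Sigma |\p u|^2 = \frac{1}{2}\int_\Sigma \hat{g}^{z\bar{z}}\, ( \p_z \p_t u \cdot u_{\bar{z}} + u_z \cdot \p_{\bar{z}}\p_t u ) = -\int_\Sigma (\p_t u)\, \hat{g}^{z\bar{z}} u_{\bar{z} z},
\]
where the last step integrates by parts in $z$ and in $\bar{z}$ respectively (legitimate since $u$ and $\p_t u$ are real). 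Combining with $\frac{d}{dt}\,\frac{1}{2}\int_\Sigma \kappa u^2 = \int_\Sigma \kappa u\,(\p_t u)$, this gives
\[
\frac{d}{dt} I(u) = -\int_\Sigma (\p_t u)\, (\hat{g}^{z\bar{z}} u_{\bar{z} z} - \kappa u) = -\int_\Sigma (\p_t u)\, \mathcal{L} u.
\]
Substituting $\p_t u = \Phi\, \mathcal{L}u$ then yields $\frac{d}{dt} I(u) = -\int_\Sigma \Phi\, (\mathcal{L} u)^2 \le 0$, which is the claim; equality at a given time holds precisely when $\mathcal{L} u \equiv 0$, i.e. at a critical point of $I$.

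There is no real obstacle here; the argument is a single integration by parts. The only things to watch are the absence of boundary terms (automatic on a closed $\Sigma$) and the sign of the factor $\Phi$. If one prefers a conceptual packaging to the direct computation, the point is that $I(u) = \frac{1}{2}\int_\Sigma (-\mathcal{L} u)\,u$ is (half of) the Dirichlet quadratic form of the self-adjoint operator $-\mathcal{L} = -\frac{1}{2}\Delta_{\hat{\omega}} + \kappa$, so that (\ref{flow-u}) is manifestly the gradient flow of $I$ with respect to the $\Phi^{-1}$-weighted $L^2(\hat{\omega})$ metric, and the non-increase of $I$ is then transparent before any computation.
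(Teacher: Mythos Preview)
Your proof is correct and follows essentially the same approach as the paper: differentiate $I(u)$, integrate by parts to obtain $\frac{d}{dt}I(u) = -\int_\Sigma (\p_t u)(\hat{g}^{z\bar{z}}u_{\bar{z}z}-\kappa u)$, and substitute the evolution equation to get $-\int_\Sigma (1-\frac{\alpha'}{2}\kappa e^{-2f})(\hat{g}^{z\bar{z}}u_{\bar{z}z}-\kappa u)^2 \le 0$. Your added remarks on the positivity of $\Phi$ and the gradient-flow interpretation are nice clarifications but do not alter the argument.
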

\begin{proof}
	Differentiating $I(u)$ with respect to $t$, we have
	\bea \label{ddt-energy}
		{d \over dt} I(u) &=&  \int_{\Sigma} - \dot{u} \hat g^{z \bar z} u_{\bar z z } +\int_{\Sigma} \kappa  \dot{u} u  \nonumber \\
		&=&  - \int_{\Sigma} (1 - {\alpha' \over 2} \kappa e^{-2f} ) (\hat g^{z\bar z}u_{\bar z z} -\kappa u)^2  \leq 0 \nonumber
	\eea
	Hence, along the flow, the energy $I(u)$ is monotone non-increasing.
\end{proof}

\subsection{Conservation laws}\label{conservation-law}
\begin{prop}
	Let $\varphi$ be any function in the kernel of $L(w) = -\hat g^{z\bar z} w_{\bar z z}+\kappa w$. Then the integral
	$\int_{\Sigma} e^f \varphi $ is constant along the flow. In particular, $\int_{\Sigma}e^f\alpha, \int_{\Sigma} e^f\beta, \int_{\Sigma}e^f\gamma $ are preserved along the flow.
\end{prop}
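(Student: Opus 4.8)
The plan is to differentiate $\int_\Sigma e^f \varphi\,\hat\omega$ in $t$ and show the result vanishes by integrating by parts, using that $\varphi \in \ker L$. First I would recall the evolution equation (\ref{flow-ef}) in the form
\[
\p_t e^f = \hat g^{z\bar z}\p_z\p_{\bar z}\!\left(e^f + {\alpha'\over 2}\kappa e^{-f}\right) - \kappa\!\left(e^f + {\alpha'\over 2}\kappa e^{-f}\right) = \hat g^{z\bar z} u_{\bar z z} - \kappa u,
\]
where $u = e^f + {\alpha'\over 2}\kappa e^{-f}$ as in (\ref{u-defn}). Hence
\[
{d\over dt}\int_\Sigma e^f\varphi = \int_\Sigma \varphi\left(\hat g^{z\bar z} u_{\bar z z} - \kappa u\right)\hat\omega = \int_\Sigma \varphi\left(-L(u)\right)\hat\omega,
\]
using that $L(u) = -\hat g^{z\bar z}u_{\bar z z} + \kappa u$.

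Next I would invoke the self-adjointness of $L$ with respect to the background volume form $\hat\omega$: since the zeroth-order coefficient $\kappa$ is real and the principal part is $-\Delta_{\hat\omega}$ (up to the identity $2i\ddb w = \Delta_{\hat\omega}w\,\hat\omega$, so that $\hat g^{z\bar z}w_{\bar z z}\,\hat\omega = i\ddb w$ and integrates against another function $v$ by parts symmetrically), we have $\int_\Sigma \varphi\, L(u)\,\hat\omega = \int_\Sigma u\, L(\varphi)\,\hat\omega$. Because $\varphi \in \ker L$, the right side is zero, so $\frac{d}{dt}\int_\Sigma e^f\varphi = 0$, which is the claim. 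For the last sentence, I would apply this with $\varphi$ replaced by $\alpha$, $\beta$, $\gamma$ in turn: by the identity (\ref{abc-indakernel}), each of $\alpha,\beta,\gamma$ lies in the kernel of $\hat g^{z\bar z}\p_z\p_{\bar z} - \kappa = -L$, hence in $\ker L$, so $\int_\Sigma e^f\alpha$, $\int_\Sigma e^f\beta$, $\int_\Sigma e^f\gamma$ are all conserved.

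There is essentially no serious obstacle here; the only point requiring a word of care is the integration by parts, which is legitimate because $\Sigma$ is compact (closed, by construction) and all functions involved are smooth, so no boundary terms appear and the symmetry of $L$ is exact. One should also note that the notation $\varphi$ is being overloaded — here it denotes an arbitrary element of $\ker L$, not the map to $\C\pp^1$ — but this is harmless. I would write the proof in three short lines: state $\p_t e^f = -L(u)$, integrate $\varphi$ against it, move $L$ onto $\varphi$ by self-adjointness, and conclude; then remark that (\ref{abc-indakernel}) gives the three named conserved quantities.
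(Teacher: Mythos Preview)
Your proof is correct and follows essentially the same route as the paper: differentiate under the integral, use $\p_t e^f = \hat g^{z\bar z}u_{\bar z z}-\kappa u$, integrate by parts (i.e.\ use the self-adjointness of $L$ on the closed surface $\Sigma$), and conclude from $L\varphi=0$; the application to $\alpha,\beta,\gamma$ via (\ref{abc-indakernel}) is likewise identical.
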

\begin{proof}
	Taking the derivative of $\int_{\Sigma} e^f \varphi $ with respect to $t$, we have
	\bea
		{d\over dt} \int_{\Sigma} e^f \varphi &=& \int_{\Sigma} (\p_t e^f ) \varphi \nonumber \\
		&=& \int_{\Sigma} (\hat{g}^{z\bar z} u_{\bar z z} - \kappa u) \varphi  \nonumber \\
		&=& \int_{\Sigma} (\hat{g}^{z\bar z} \varphi_{\bar z z} - \kappa \varphi) u. \nonumber
	\eea
	This vanishes because $\varphi$ is in the kernel of the operator $L(w) = -\hat{g}^{z\bar z} w_{\bar z z} + \kappa w$. As previously noted (\ref{abc-indakernel}), $\alpha, \beta, \gamma$ are all in the kernel, so the last statement follows readily.
\end{proof}

For later use, let us denote the vector
\[(\int_\Sigma e^f\alpha, \int_\Sigma e^f\beta, \int_\Sigma e^f\gamma)\in\rr^3\]
by $V$, which is a constant vector along the flow. Therefore we have
\be \label{L1 est}
    \int_\Sigma e^f \geq \int_\Sigma e^f (\alpha,\beta,\gamma)\cdot\frac{V}{|V|}=|V|,
\ee
as long as the flow exists. As a consequence, if we start the flow with initial data such that $|V|>0$, then automatically we have a lower bound of $\int e^f$.

The conservation laws presented here arise from the fact that the Anomaly flow preserves the conformally balanced cohomology class $[ \| \Omega \|_\omega \, \omega^2 ] \in H^4(X;\mathbb{R})$. In the case of generalized Calabi-Gray manifolds with our ansatz, the de Rham conformally balanced cohomology class is parameterized exactly by the vector $V$, as can be seen by the expression (\ref{balanced-class}).

We conjecture that there are no solutions to the Hull-Strominger system, or equivalently stationary points of the Anomaly flow, with $V=0$. We can show this conjecture is true for certain special hyperelliptic genus 3 curves. The general case can be rephrased as a function theory problem on Riemann surfaces.

\subsection{Finite time blow up}

\begin{prop}\label{finitesing}
	If the $L^1$ norm of $e^{f(\cdot, 0)}$ is initially sufficiently small such that $$\sup_{\Sigma}(-\kappa) \cdot \left( \int_{\Sigma} e^{f(\cdot, 0)} \right)^2 < 8 \alpha' \pi^2(g-1)^2,$$ where $g$ is the genus of $\Sigma$, then the flow will develop a finite time singularity.
\end{prop}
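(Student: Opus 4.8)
The plan is to control the total mass $A(t):=\int_\Sigma e^{f(\cdot,t)}\,\hat\omega$ and to show that, under the stated smallness hypothesis, $A(t)$ would be forced to reach zero in finite time. Since $e^f>0$ pointwise, this is impossible, so the flow cannot persist; by Theorem \ref{long-time-criterion} the resulting finite-time singularity is necessarily of the form $\sup_\Sigma e^{-f}\to\infty$. (Note also that the statement is vacuous unless $g\ge 2$, which is consistent, since $\varphi$ is constant and $\kappa\equiv 0$ when $g\le 1$.)

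First I would differentiate $A$ along the flow. From $\p_t e^f = \hat g^{z\bar z} u_{\bar z z} - \kappa u$ and $\int_\Sigma \hat g^{z\bar z} u_{\bar z z}\,\hat\omega = \tfrac12\int_\Sigma \Delta_{\hat\omega}u\,\hat\omega = 0$, one obtains
\[
A'(t) = -\int_\Sigma \kappa u\,\hat\omega = -\int_\Sigma \kappa e^f\,\hat\omega - \frac{\alpha'}{2}\int_\Sigma \kappa^2 e^{-f}\,\hat\omega .
\]
The first term is bounded above by $\big(\sup_\Sigma(-\kappa)\big)A(t)$. For the second term I would apply Cauchy--Schwarz, $\big(\int_\Sigma |\kappa|\,\hat\omega\big)^2 \le \big(\int_\Sigma \kappa^2 e^{-f}\,\hat\omega\big)\big(\int_\Sigma e^f\,\hat\omega\big)$, together with Gauss--Bonnet, $\int_\Sigma|\kappa|\,\hat\omega = -\int_\Sigma\kappa\,\hat\omega = 4\pi(g-1)$ (using $\kappa\le 0$), to obtain $\int_\Sigma\kappa^2 e^{-f}\,\hat\omega \ge 16\pi^2(g-1)^2/A(t)$. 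Writing $M := \sup_\Sigma(-\kappa)$, this yields the differential inequality
\[
A'(t) \le M\,A(t) - \frac{8\alpha'\pi^2(g-1)^2}{A(t)} .
\]

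It then remains to run an elementary comparison. By hypothesis $M A(0)^2 < 8\alpha'\pi^2(g-1)^2$, so $A'(0)<0$; and whenever $M A(t)^2 < 8\alpha'\pi^2(g-1)^2$ the right-hand side above is strictly negative, so $A$ stays strictly decreasing and the bound $M A(t)^2 \le M A(0)^2$ persists for as long as the flow exists. Setting $\epsilon_0 := 8\alpha'\pi^2(g-1)^2 - M A(0)^2 > 0$, we get $A(t)A'(t) \le -\epsilon_0$, i.e. $\tfrac{d}{dt}\big(\tfrac12 A(t)^2\big)\le -\epsilon_0$, whence $A(t)^2 \le A(0)^2 - 2\epsilon_0 t$. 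Since $A(t)>0$ throughout the existence interval, the flow cannot survive up to $t^\ast := A(0)^2/(2\epsilon_0)$, and a finite-time singularity occurs.

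The only substantive step is the second paragraph. The point worth emphasizing is that the reciprocal term $e^{-f}$ enters the a priori estimates of Theorem \ref{long-time-criterion} with an unfavorable sign, whereas here it is exactly what drains the mass; the content of the argument is that the topological constant produced by Cauchy--Schwarz and Gauss--Bonnet dominates the linear growth governed by $\sup_\Sigma(-\kappa)$ precisely under the stated threshold. Everything else is routine: differentiation under the integral sign is legitimate since $f$ is smooth on $\Sigma\times[0,T)$, the Laplacian term integrates to zero by Stokes' theorem, and the final step is a one-line ODE comparison.
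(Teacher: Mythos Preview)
Your proof is correct and follows essentially the same approach as the paper: integrate the flow equation to obtain an ODE for $A(t)=\int_\Sigma e^f$, bound the linear term by $\sup_\Sigma(-\kappa)\,A$, bound the reciprocal term from below via Cauchy--Schwarz and Gauss--Bonnet, and then run an ODE comparison on $A^2$. The only cosmetic difference is in the final step: the paper integrates the linear inequality $\tfrac{d}{dt}A^2\le 2KA^2-16\alpha'\pi^2(g-1)^2$ to obtain the sharper exponential estimate $KA(t)^2\le 8\alpha'\pi^2(g-1)^2 - e^{2Kt}\big(8\alpha'\pi^2(g-1)^2-KA(0)^2\big)$, whereas you freeze $MA(t)^2\le MA(0)^2$ and get the cruder linear bound $A(t)^2\le A(0)^2-2\epsilon_0 t$; both yield a finite-time singularity.
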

\begin{proof}
By the Gauss-Bonnet Theorem, we have
	\[-\int_\Sigma\kappa=4\pi(g-1).\]
Integrating equation (\ref{flow-ef}) gives
\be \label{evol-int-e^f}
{d \over dt} \left(\int_\Sigma e^f\right)=\int_\Sigma (-\kappa)e^f-\frac{\alpha'}{2}\int_\Sigma \frac{\kappa^2}{e^f}.
\ee
By the Cauchy-Schwarz inequality,
\[\int_\Sigma \frac{\kappa^2}{e^f}\cdot\int_\Sigma e^f\geq\left(\int_\Sigma-\kappa\right)^2=16\pi^2(g-1)^2.\]
If we denote by $A(t)$ the total integral of $e^f$ at time $t$, i.e.,
\[A(t)=\int_\Sigma e^{f(\cdot, t)},\]
and let $K=\sup_X(-\kappa)>0$, then
\[{d\over dt} A \leq   K A-\frac{8 \alpha'\pi^2(g-1)^2}{A}.\]
Equivalently, we have
\[
	{d\over dt} A^2 \leq  2 K A^2 - 16 \alpha' \pi^2(g-1)^2.
\]
From this inequality we see that if $A(0)$ is sufficiently small such that
\[
K A(0)^2<  8\alpha'\pi^2(g-1)^2,
\]
then $A(t)$ is decreasing in $t$, hence we have the bound
\[\int_\Sigma e^{f(\cdot,t)}\leq\int_\Sigma e^{f(\cdot,0)}.\]
In fact we have the estimate
\be \label{est-of-A}
K A(t)^2 \leq 8\alpha'\pi^2(g-1)^2-e^{2 K t}\left(8\alpha'\pi^2(g-1)^2-KA(0)^2\right)
\ee
and the Anomaly flow develops singularity at a finite time.
\end{proof}

The above calculation allows us to give an estimate of the maximal existence time $T$. Let \[E=8\alpha'\pi^2(g-1)^2-KA(0)^2>0.\]
Combining (\ref{est-of-A}) with (\ref{L1 est}), we get
\[K|V|^2<8\alpha'\pi^2(g-1)^2-e^{2KT}E,\]
hence we get the estimate
\[
    T < \frac{1}{2K}\left(\log(8\alpha'\pi^2(g-1)^2-K|V|^2)-\log E\right).
\]

In terms of the intrinsic formulation of our flow (\ref{spinorial-pair-flow}), we just proved the following geometric picture. If we start the flow on a Riemann surface with small initial area, then the area will keep decreasing and before the area reaches zero, the flow will develop finite-time singularities at certain points on the surface. This behavior is in contrast with the Ricci flow.

\begin{rmk}
Though we may think of the Anomaly flow as a generalization of the K\"ahler-Ricci flow on non-K\"ahler Calabi-Yau's with correction terms, there is a fundamental difference. In the K\"ahler-Ricci flow, it is well-known that \cite{cao1985, tsuji1988, tsuji1994, tian2006} that the maximal existence time depends only on the initial K\"ahler class. However for Anomaly flow, our examples shows that even if we fix the de Rham conformally balanced class, the behavior of the flow is sensitive to the initial representative of the cohomology class.

To be precise, choose $e^{f_1}$ small as the first initial data such that the Anomaly flow blows up at finite time as a consequence of Proposition \ref{finitesing}. Let $e^{f_2}=e^{f_1}+Mq_1$ be the second set of initial data, where $q_1$ is the first eigenfunction of the operator $-\Delta_{\hat{\omega}}+2\kappa$ and $M$ is a large positive constant such that $e^{2f_2}>-\alpha'\kappa/2$, or equivalently, the corresponding $u$ is positive. By Theorem \ref{u>0-conv}, to be shown below, we have long-time existence of the flow using this initial condition. Notice that the de Rham conformal balanced cohomology class $[ \| \Omega \|_{\omega_f} \, \omega_f^2 ] \in H^4(X;\mathbb{R})$ is parameterized by three integrals
\[\left(\int_\Sigma e^f\alpha,\int_\Sigma e^f\beta,\int_\Sigma e^f\gamma\right).\]
Our construction implies
\[\left(\int_\Sigma e^{f_1}\alpha,\int_\Sigma e^{f_1}\beta,\int_\Sigma e^{f_1}\gamma\right)=\left(\int_\Sigma e^{f_2}\alpha,\int_\Sigma e^{f_2}\beta,\int_\Sigma e^{f_2}\gamma\right),\]
as $q_1$ and $\{\alpha,\beta,\gamma\}$ are eigenfunctions of the same self-adjoint operator $-\Delta_{\hat{\omega}}+2\kappa$ with distinct eigenvalues. Therefore we can construct two sets of initial data with same de Rham cohomology class such that the first develops a finite time singularity and the second has long-time existence.
\end{rmk}


\subsection{A local estimate in time}
In this section, we prove an estimate on $e^f$ which is independent of the initial data.

\begin{prop}
Suppose $e^f$ solves (\ref{flow-ef}) on $[0,T)$. For any $0<T_0<T$,
\[
\sup_{\Sigma \times [T_0,T]} e^{f} \leq C(T_0,T) \max \bigg\{1 , \int_0^T \int_{\Sigma} e^f \bigg\},
\]
where $C(T_0,T)$ only depends on $T_0$, $T$, $(\Sigma,\hat{\omega})$, $\kappa$, and $\alpha'$.
\end{prop}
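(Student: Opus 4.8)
The plan is to run a Moser iteration on $e^f$ based on the evolution equation (\ref{flow-ef}). Write $v = e^f$, so that $v > 0$ solves
\[
\p_t v = \hat g^{z\bar z}\p_z\p_{\bar z}\Big( v + \tfrac{\alpha'}{2}\kappa v^{-1}\Big) - \kappa\Big( v + \tfrac{\alpha'}{2}\kappa v^{-1}\Big),
\]
equivalently $\p_t v = \operatorname{div}(a\,\nabla v) + (\text{lower order})$ once one expands $\p_z\p_{\bar z}(\kappa v^{-1})$; the key structural fact is that the principal coefficient $a = (1 - \tfrac{\alpha'}{2}\kappa v^{-2})\hat g^{-1}$ is \emph{uniformly elliptic from below} (it is $\geq \hat g^{-1}$ since $\kappa\le 0$), with no upper bound assumed. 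This is the crucial point: the reciprocal term $v^{-1}$, which causes the finite-time blow-up of Proposition \ref{finitesing}, only makes diffusion \emph{stronger}, never weaker, so it helps rather than hurts an upper bound on $v$. The terms involving $v^{-1}$, $\p_z(\kappa v^{-1})$, etc., when we test against a power of $v$, will either have a favorable sign or be absorbable.

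First I would fix a cutoff in time: choose $\chi(t)$ with $\chi \equiv 0$ near $t=0$ and $\chi\equiv 1$ on $[T_0,T]$. Testing the equation against $\chi v^{p-1}$ (for $p\ge 2$, or more carefully against $\chi(v^{p/2})$-type quantities), integrating by parts on $\Sigma$, and using $\kappa\le 0$ together with $a\ge \hat g^{-1}$, one obtains the energy inequality
\[
\sup_{t}\int_\Sigma \chi\, v^{p} + \int\!\!\int \chi\,|\nabla v^{p/2}|^2 \;\leq\; C\,p\,\Big(\|\chi'\|_\infty + 1\Big)\int_0^T\!\!\int_\Sigma v^{p} \;+\; (\text{terms from the }v^{-1}\text{ parts}).
\]
The $v^{-1}$ contributions are the ones to watch: after integrating by parts the term $\p_z\p_{\bar z}(\kappa v^{-1})$ produces things like $\int \kappa v^{-1}\p_z v^{p-1}\p_{\bar z}(\cdots)$; one checks these are controlled because $v^{-1}\le $ (a function of $v$ bounded where $v$ is small) — concretely, on the set $\{v\le 1\}$ one has $v^{-1}\cdot v^{p}\le v^{p-1}\le 1$, and on $\{v\ge 1\}$ the $v^{-1}$ is harmless, so everything is dominated by $\int\!\!\int (v^p + 1)$. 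Combined with the parabolic Sobolev inequality on the surface $(\Sigma,\hat\omega)$, this gives the reverse-Hölder / $L^p$-to-$L^{2p}$ gain needed for iteration.

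Then I iterate: starting from control of $\int_0^T\!\!\int_\Sigma v$ (the $p=1$ input, which is exactly the quantity appearing in the statement, modulo first bootstrapping $L^1$ in space-time to $L^q$ for some $q>1$ using the $p=1$ energy estimate and Sobolev), one runs the standard Moser iteration $p_k = q\cdot 2^k$ and passes to $k\to\infty$ to bound $\sup_{\Sigma\times[T_0,T]} v$ by $C(T_0,T)\max\{1,\int_0^T\!\!\int_\Sigma v\}$; the constant absorbs the geometry of $(\Sigma,\hat\omega)$, $\|\kappa\|_{C^2}$, $\alpha'$, and the choice of $\chi$ (hence $T_0, T$). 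The main obstacle I anticipate is bookkeeping the $v^{-1}$ (and $\p\kappa\cdot v^{-1}$) terms through the integration by parts so that they are genuinely absorbed into $\int\!\!\int(v^p+1)$ with constants growing at most polynomially in $p$ — one has to be a little careful to split into $\{v\le 1\}$ and $\{v\ge 1\}$ and to use Young's inequality to soak up the gradient cross-terms into the good $\int\!\!\int|\nabla v^{p/2}|^2$ on the left. Once that absorption is clean, the rest is the textbook parabolic Moser machine.
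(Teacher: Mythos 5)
Your overall strategy is the paper's strategy: test the evolution equation against powers $e^{(p-1)f}$ with a time cutoff, use $\kappa\le 0$ and $a^{z\bar z}\ge \hat g^{z\bar z}$ to get a favorable energy inequality, run Moser iteration with the $2$-dimensional parabolic Sobolev inequality, and absorb the $e^{-f}$ contributions. Where the proposal has a genuine gap is in the bridge from the $L^1$ space-time norm to the $L^q$ norm at which Moser iteration can actually begin. You write ``modulo first bootstrapping $L^1$ to $L^q$ for some $q>1$ using the $p=1$ energy estimate and Sobolev,'' but there is no usable $p=1$ energy estimate here: testing with $\chi$ alone gives no gradient term, and for $p$ close to $1$ the cross-term $\int e^{(p-2)f}\,\p f\wedge\bar\p\kappa$ cannot be absorbed. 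If you integrate that term by parts you pick up a factor $1/(p-2)$ that blows up near $p=2$; if instead you apply Young's inequality you are left with $\int e^{(p-4)f}|\p\kappa|^2$, and your $\{v\le 1\}$ versus $\{v\ge 1\}$ split does not control $e^{(p-4)f}$ on $\{v\le 1\}$ when $p<4$. In the paper the energy inequality is only run for $p\ge 3$, and the iteration bottoms out at $L^5$, not $L^1$.

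The paper closes the gap with a separate device that you would not get from the standard Moser machine. Having established
\[
\sup_{\Sigma\times[\theta_1,T]} e^f \;\le\; \frac{C(T)}{(\theta_1-\theta_2)^3}\,\max\bigl\{1,\|e^f\|_{L^5(\Sigma\times[\theta_2,T])}\bigr\},
\]
they interpolate $\|e^f\|_{L^5}\le (\sup e^f)^{4/5}\,(\int\!\!\int e^f)^{1/5}$, apply Young's inequality to split off $\tfrac12\sup_{\Sigma\times[\theta_2,T]} e^f$, and then run a geometric iteration over nested time slices $\theta_0>\theta_1>\cdots\to (1-\tfrac r2)T_0$ (as in Han--Lin) so that the $\tfrac12\sup$ terms telescope away and only $\max\{1,\int_0^T\!\!\int_\Sigma e^f\}$ survives. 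If you supplement your argument with this Young-plus-time-slice iteration (and restrict your Moser ladder to start at $p\ge 3$, using H\"older rather than your pointwise split to control $\int e^{(p-2)f}$), the proof goes through; without it, the claim that one can start iterating directly from $L^1$ does not hold.
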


\begin{rmk}
Being independent of the initial data, this estimate can be translated in time. It follows that $\int_\Sigma e^f$ locally controls the pointwise behavior of $e^f$ around any point in time.
\end{rmk}

\begin{proof}
First, we compute using (\ref{f-dot})
\begin{equation}
\begin{aligned}
{1 \over p} {d \over dt} \int_\Sigma e^{pf} &= \int_\Sigma e^{pf} (1 - {\alpha' \over 2} e^{-2f} \kappa) \, i \ddb f + \int_\Sigma e^{pf} (1 + {\alpha' \over 2} \kappa e^{-2f}) |\p f|^2 \nonumber\\
& - \alpha' \int_\Sigma e^{pf} e^{-2f} {\rm Re} \{ \hat{g}^{z \bar{z}} \p_z \kappa \p_{\bar{z}} f \}  -  \int_\Sigma e^{pf} \kappa  - {\alpha' \over 2}  \int_\Sigma e^{pf} e^{-2f} (\kappa^2 - \hat{g}^{z\bar{z}} \kappa_{\bar{z} z}).
\end{aligned}
\end{equation}
We use integration by parts on the first term on the right-hand side to obtain
\begin{equation}
\begin{aligned}
{1 \over p} {d \over d t} \int_\Sigma e^{pf} &= -p \int_\Sigma e^{pf} |\p f|^2 +  {\alpha' \over 2} (p-2) \int_\Sigma e^{(p-2)f} \kappa |\p f|^2 + {\alpha' \over 2} \int_\Sigma e^{(p-2)f} \p \kappa \wedge \bar{\p} f  \nonumber\\
& + \int_\Sigma e^{pf} (1 + {\alpha' \over 2} \kappa e^{-2f}) |\p f|^2  - \alpha' \int_\Sigma e^{pf} e^{-2f} {\rm Re} \{ \hat{g}^{z \bar{z}} \p_z \kappa \p_{\bar{z}} f \} \nonumber\\
& -  \int_\Sigma e^{pf} \kappa  - {\alpha' \over 2}  \int_\Sigma e^{pf} e^{-2f} (\kappa^2 - \hat{g}^{z\bar{z}} \kappa_{\bar{z} z}).
\end{aligned}
\end{equation}
Simplifying the above expression, and using $a^{z \bar{z}} = (1- \frac{\alpha'}{2} \kappa e^{-2f}) \hat{g}^{z \bar{z}}$ yields
\[
{1 \over p} {d \over d t} \int_\Sigma e^{pf} =  -(p-1) \int_\Sigma e^{pf} a^{z \bar{z}} f_z f_{\bar{z}} - {\alpha' \over 2}  \int_\Sigma e^{(p-2)f}\p f \wedge \bar{\p} \kappa  -  {\alpha' \over 2} \int_\Sigma e^{(p-2)f}(\kappa^2 - \hat{g}^{z\bar{z}} \kappa_{\bar{z} z})  -  \int_\Sigma e^{pf} \kappa.
\]
Using integration by parts again gives
\[
(p-1) \int_\Sigma e^{pf} a^{z \bar{z}} f_z f_{\bar{z}} + {1 \over p} {d \over d t} \int_\Sigma e^{pf}
= {\alpha' \over 2} \bigg(1 + {1 \over p-2}\bigg) \int_\Sigma e^{(p-2)f} \hat{g}^{z\bar{z}} \kappa_{\bar{z} z}  -  \int_\Sigma e^{pf} \kappa \, (1 + {\alpha' \over 2} \kappa e^{-2f}) .
\]
For any $p \geq 3$, we estimate
\[
(p-1) \int_\Sigma e^{pf} |\p f|^2 + {d \over d t}{1 \over p} \int_\Sigma e^{pf} \leq C \left( \int_\Sigma e^{pf} + \int_\Sigma e^{(p-2)f} \right).
\]
Here we used $a^{z \bar{z}} \geq \hat{g}^{z \bar{z}}$. In the proof of this proposition, we let $C$ denote any constant depending only on $(\Sigma,\hat{\omega})$, $\kappa$, and $\alpha'$, which may change line by line.
\smallskip
\par For $0< \tau < \tau' < T$, we introduce $\zeta(t) \geq 0$ which is a monotone function satisfying $\zeta \equiv 0$ for $t \leq \tau$, $\zeta \equiv 1$ for $t \geq \tau'$, and $|\zeta'| \leq 2(\tau'-\tau)^{-1}$. For any $p \geq 3$, we have the inequality
\[
{p \zeta \over 2} \int_\Sigma  e^{pf} |\p f|^2 + {d \over d t} {\zeta \over p} \int_\Sigma  e^{pf}
\leq C \bigg\{ \zeta \int_\Sigma e^{(p-2)f} +  \zeta \int_\Sigma  e^{pf} \bigg\} + {\zeta' \over p} \int_\Sigma  e^{pf} .
\]
Let $\tau' < s \leq T$, and integrate the previous inequality from $\tau$ to $s$.
\[ {p \over 2} \int_{\tau'}^{s} \int_\Sigma  e^{pf} |\p f|^2 + {1 \over p} \int_\Sigma  e^{pf}(s) \leq C \bigg\{ \int_{\tau}^T\int_\Sigma e^{(p-2)f} +  \int_{\tau}^T\int_\Sigma  e^{pf} + {1 \over \tau' - \tau} \int_{\tau}^T \int_\Sigma  e^{pf} \bigg\}. \]
We conclude the estimate
\[
 \int_{\tau'}^{s} \int_\Sigma  |\p e^{{p \over 2} f}|^2 +  \int_\Sigma  e^{p f}(s) \leq C p \bigg\{ 1 +  {1 \over \tau' - \tau} \bigg\} \bigg\{ \int_{\tau}^T \int_\Sigma  e^{p f} +\int_{\tau}^T\int_\Sigma e^{(p-2)f} \bigg\},
\]
which after using H\"older's inequality leads to
\be \label{iteration_sup_est}
 \int_{\tau'}^{s} \int_\Sigma  |\p e^{{p \over 2} f}|^2 +  \int_\Sigma  e^{p f}(s) \leq C p \bigg\{ 1 +  {1 \over \tau' - \tau} \bigg\} \bigg\{ \int_{\tau}^T \int_\Sigma  e^{p f} + T^{2/p} \bigg( \int_{\tau}^T \int_\Sigma  e^{p f} \bigg)^{p-2 \over p}\bigg\}.
\ee
Recall that we are working on a manifold $(\Sigma,\hat{\omega})$ of real dimension $n=2$. By the Sobolev inequality 
\[
 \left( \int_\Sigma  e^{3p f} \right)^{1 \over 6} \leq C  \left( \int_\Sigma | e^{{p \over 2}f}|^{3/2}  + \int_\Sigma |\p e^{{p \over 2}f}|^{3/2} \right)^{2/3} .
\]
By H\"older's inequality
\[
 \left( \int_\Sigma  e^{3 p f} \right)^{1 \over 3} \leq C  \left( \int_\Sigma | e^{{p \over 2}f}|^{2}  + \int_\Sigma |\p e^{{p \over 2}f}|^{2} \right) .
\]
Let $\beta=3$ and $\beta^*={3 \over 2}$, and note that ${1 \over \beta} + {1 \over \beta^*} = 1$. The previous inequality implies
\bea
\int_{\tau'}^{T} \int_\Sigma e^{p f} e^{{p \over \beta^*} f} &\leq& \int_{\tau'}^{T} \bigg(  \int_\Sigma e^{p \beta f} \bigg)^{1/\beta} \bigg( \int_\Sigma e^{pf} \bigg)^{1/\beta^*} \nonumber\\
&\leq& C \sup_{s \in [\tau',T]} \bigg( \int_\Sigma e^{pf} \bigg)^{1/\beta^*} \int_{\tau'}^{T} \bigg\{ \int_\Sigma e^{pf} + \int_\Sigma |\p e^{{p \over 2}f}|^2 \bigg\}. \nonumber
\eea
Let $\gamma = 1+{1 \over \beta^*} = {5 \over 3}$. Using estimate (\ref{iteration_sup_est}),
\[
\bigg( \int_{\tau'}^{T} \int_\Sigma  e^{ \gamma p f} \bigg)^{1/ \gamma} \leq  C (1+T^{2/p}) p \bigg\{ 1+  {1 \over \tau' - \tau} \bigg\}  \max \bigg\{ 1, \int_{\tau}^T \int_\Sigma  e^{p f} \bigg\}.
\]
for $p \geq 3$. Let $0 < \theta_2 < \theta_1 < T$ and $\tau_p = \theta_1 - \gamma^{-p} (\theta_1-\theta_2)$. Replacing $p$ with $\gamma^p$ and setting $\tau'=\tau_{p+1}$ and $\tau=\tau_p$ gives
\[
\bigg( \int_{\tau_{p+1}}^{T} \int_\Sigma  e^{ \gamma^{p+1} f} \bigg)^{1/ \gamma^{p+1}} \leq  (C(1+T^{2/\gamma^p}))^{1/\gamma^p} \bigg\{ \gamma^p +  {1 \over \theta_1 -\theta_2} {\gamma^{2p} \over 1 - \gamma^{-1}} \bigg\}^{1 / \gamma^p} \max \bigg\{ 1, \int_{\tau_p}^T \int_\Sigma  e^{\gamma^p f} \bigg\}^{1 / \gamma^p}.
\]
We now iterate this inequality down to $5 \geq \gamma^{3} \geq 3$. Noting that $\sum \gamma^{-p} \leq 3$, we obtain
\[
\sup_{\Sigma \times [\theta_1,T]} e^{f} \leq {C(T) \over (\theta_1-\theta_2)^3 } \max\{ 1, \| e^{f} \|_{L^{5}(\Sigma \times [\theta_2,T])} \},
\]
Here $C(T)$ only depends on $(\Sigma,\hat{\omega})$, $\kappa$, $T$ and $\alpha'$. We must now relate the $L^{5}$ norm of $e^f$ to its $L^1$ norm, which we can do via a scaling argument such as in \cite{han2011}. By Young's inequality,
\bea
\sup_{\Sigma \times [\theta_1,T]} e^{f} &\leq&  C(T) (\theta_1-\theta_2)^{-3} \max \left\{ \bigg( \sup_{\Sigma \times [\theta_2,T]} e^{(1 -1/5)f} \bigg) \bigg( \int_{\Sigma \times [\theta_2,T] } e^{f} \bigg)^{1/5}, 1 \right\} \nonumber\\
&\leq& {1 \over 2}  \sup_{\Sigma \times [\theta_2,T]} e^f + C(T) (\theta_1-\theta_2)^{-15} \max \left\{1, \int_{\Sigma \times [0,T] } e^{f} \right\} \nonumber,
\eea
for all $0<\theta_2<\theta_1$. Now let $0<T_0<T$ and $p>1$. Iterating this inequality with $\theta_0=T_0$ and $\theta_{i+1} = \theta_i - {1 \over 2}(1-r)r^{i+1} T_0 $, where $r$ is chosen such that $1/2 < r^{15} < 1$, we obtain
\[
\sup_{\Sigma \times [T_0,T]} e^{f} \leq {1 \over 2^p} \left( \sup_{\Sigma \times[\theta_p,T]} e^f \right) + {2^{15} \, C(T) \over (1-r)^{15} r^{15} T_0^{15}} \sum_{i=0}^{p-1} \left( {1 \over 2 r^{15}} \right)^{i} \max \left\{1, \int_{\Sigma \times [0,T] } e^{f} \right\}.
\]
Taking the limit as $p \rightarrow \infty$, we note that $\theta_p \rightarrow (1-\frac{r}{2})T_0$ and the first term goes to zero. This yields the desired estimate.

\end{proof}

\section{Large initial data}
\label{collapsing-section}

There is a class of initial data where the flow (\ref{flow-ef}) on a vanishing spinorial pair $(\Sigma,\varphi)$ exists for all time. Since $- \kappa \geq 0$, an application of the maximum principle to (\ref{flow-u-2}) shows that the condition $u \geq 0$ is preserved along the flow. In terms of $f$, this means
\be \label{u-geq-0}
e^{2f} \geq {\alpha' \over 2} (-\kappa).
\ee
Solutions in this region will be said to have large initial data, and in this section we will analyse these solutions. We recall the convention that norms and integrals are taken with respect to the background metric $\hat{\omega}$.
\begin{thm} \label{u>0-conv}
Suppose $u(x,0) \geq 0$, or equivalently (\ref{u-geq-0}), and start the Anomaly flow $(\ref{flow-ef})$. Then the flow exists for all time, and as $t \rightarrow \infty$,
\[
{e^f \over \left( \int_\Sigma e^{2f} \right)^{1/2}} \rightarrow q_1
\]
smoothly, where $q_1$ is the first eigenfunction of the operator $-\Delta_{\hat{\omega}} + 2 \kappa$ with normalization $q_1>0$ and $\|q_1\|_{L^2(\Sigma,\hat{\omega})}=1$.
\end{thm}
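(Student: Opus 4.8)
\emph{The plan.} I would exploit the fact that, under the hypothesis $u(\cdot,0)\ge 0$ — equivalent to (\ref{u-geq-0}) and preserved by (\ref{flow-u-2}) — the flow is an exponentially small perturbation of a self-adjoint linear flow whose leading mode is the first eigenfunction $q_1$. First I would record three reductions. Writing the evolution (\ref{flow-u}) of $u$ as $\p_t u=m\,(-Lu)$, where $L$ is the operator of \S\ref{conservation-law} and $m:=1-\tfrac{\alpha'}{2}\kappa e^{-2f}$, the hypothesis forces $1\le m\le 2$ (since $-\kappa\ge 0$ and $e^{2f}\ge\tfrac{\alpha'}{2}(-\kappa)$), so the $u$-equation is uniformly parabolic for as long as the flow exists; solving (\ref{u-defn}) gives $u\le e^f\le u+C$ with $C=C(\Sigma,\hat\omega,\kappa,\alpha')$; and $-\Delta_{\hat\omega}q_1+2\kappa q_1=\lambda_1 q_1$ together with $\Delta_{\hat\omega}=2\hat g^{z\bar z}\p_z\p_{\bar z}$ gives $-Lq_1=\eta q_1$ with $\eta=-\lambda_1/2>0$.

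\emph{Steps 1 and 2: barriers, long-time existence, growth rate, uniform bounds.} Since $-Lq_1=\eta q_1$, $q_1>0$ and $m\ge 1$, the function $c\,e^{\eta t}q_1$ is a subsolution of (\ref{flow-u}) for every $c>0$. As $u\not\equiv 0$ (it equals $e^f>0$ at the branch points of $\varphi$), the strong maximum principle gives $u(\cdot,t_0)>0$ for every $t_0>0$; comparing with the barrier started at $t_0$, with $c=\inf_\Sigma u(\cdot,t_0)/q_1>0$, I would obtain $u\ge c\,e^{\eta(t-t_0)}q_1$. Then $e^f\ge u$ is bounded below on finite time intervals, so Theorem \ref{long-time-criterion} yields long-time existence; moreover $e^{2f}\ge u^2\ge c^2e^{2\eta(t-t_0)}q_1^2$ and $q_1$ bounded below on the compact surface $\Sigma$ give $0\le m-1=\tfrac{\alpha'}{2}(-\kappa)e^{-2f}\le Ce^{-2\eta t}$. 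Now set $\tilde\psi:=e^{-\eta t}u/q_1$; a direct computation using $-Lq_1=\eta q_1$ gives
\[ \p_t\tilde\psi=m\,Q\tilde\psi+(m-1)\eta\,\tilde\psi,\qquad Q\tilde\psi:=\hat g^{z\bar z}\tilde\psi_{\bar z z}+\tfrac{2}{q_1}\,{\rm Re}\{\hat g^{z\bar z}(q_1)_z\tilde\psi_{\bar z}\}, \]
where $Q$ is negative semidefinite and self-adjoint with respect to the weighted volume $q_1^2\,\hat\omega$, with kernel the constant functions. By the lower barrier $\tilde\psi\ge c>0$; applying the maximum principle to $\tilde\psi$ with $m-1\le Ce^{-2\eta t}$ and Gr\"onwall's inequality gives $\tilde\psi\le C$, so $c\le\tilde\psi\le C$ uniformly. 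Since $e^{-2f}$, hence $m$, is a smooth function of $u=e^{\eta t}q_1\tilde\psi$ and $\kappa$, the $\tilde\psi$-equation is uniformly parabolic with bounded coefficients, so Krylov--Safonov, the parabolic Schauder estimates, and a bootstrap give uniform-in-time bounds $\|\tilde\psi\|_{C^k}\le C_k$ for all $k$.

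\emph{Step 3: convergence.} Let $\bar\psi(t)=\int_\Sigma\tilde\psi\,q_1^2\big/\int_\Sigma q_1^2$ and $w=\tilde\psi-\bar\psi$. From $\int_\Sigma(Q\tilde\psi)q_1^2=0$ I get $|\dot{\bar\psi}|\le Ce^{-2\eta t}$, hence $\bar\psi(t)\to c_\infty\ge c>0$. A weighted energy estimate for $\int_\Sigma w^2 q_1^2$, using $\int_\Sigma wQw\,q_1^2\le-\nu\int_\Sigma w^2 q_1^2$ with $\nu>0$ the spectral gap of $-Q$ and absorbing the $O(e^{-2\eta t})$ errors coming from $(m-1)$, gives $\|w(t)\|_{L^2(q_1^2\hat\omega)}\le Ce^{-\delta t}$ for some $\delta>0$; interpolating with the uniform $C^k$ bounds upgrades this to $\tilde\psi\to c_\infty$ in $C^\infty(\Sigma)$. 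Therefore $u/(\int_\Sigma u^2)^{1/2}=q_1\tilde\psi/(\int_\Sigma\tilde\psi^2 q_1^2)^{1/2}\to q_1$ smoothly (using $\|q_1\|_{L^2}=1$), and since $e^f=\tfrac12(u+\sqrt{u^2-2\alpha'\kappa})$ with $u\to\infty$ uniformly, $e^f/u\to 1$ smoothly; combining yields $e^f/(\int_\Sigma e^{2f})^{1/2}\to q_1$ smoothly.

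\emph{Main obstacle.} The heart of the argument is the analytic content of Steps 1--2: pinning down the sharp growth rate $e^{\eta t}$ of $u$ through the matched barriers $c\,e^{\eta t}q_1\le u\le Ce^{\eta t}q_1$. This is exactly what forces $e^{-2f}\to 0$ and renders the nonlinear correction $-\tfrac{\alpha'}{2}\kappa e^{-2f}$ asymptotically negligible, so that the dynamics of $\tilde\psi$ reduce to the self-adjoint linear flow $\p_t\tilde\psi=Q\tilde\psi$ up to exponentially small errors; once this is in place, the convergence in Step 3 is routine. A secondary subtlety is that the higher-order bootstrap must be carried out for $\tilde\psi$ (equivalently, for $f-\eta t$) rather than for $f$, since $f$ itself tends to infinity.
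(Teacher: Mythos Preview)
Your argument is correct and takes a genuinely different route from the paper's. The paper normalizes by $\|u\|_{L^2}$ rather than by $e^{\eta t}q_1$, and obtains the two-sided pointwise control $C^{-1}\le u/\|u\|_{L^2}\le C$ not by barriers but by parabolic Harnack theory: the local maximum principle and weak Harnack inequality for the linear equation $(\p_t-a^{z\bar z}\p_z\p_{\bar z}-(-\kappa a^{z\bar z}\hat g_{\bar z z}))u=0$, after first showing $\int_\Sigma e^f q_1$ grows like $e^{\eta t}$ by testing against $q_1$. For convergence, the paper uses the Rayleigh-type energy $I(v)=\tfrac12\int|\p v|^2+\tfrac12\int\kappa v^2$ with $v=u/\|u\|_{L^2}$, shows $\tfrac{d}{dt}I(v)\le-\int\dot v^2+Ce^{-\eta t}$, and then argues by compactness that every subsequential limit must equal $q_1$; no rate is obtained. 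Your barrier $c\,e^{\eta t}q_1\le u\le Ce^{\eta t}q_1$ is more direct, avoids the Lieberman-type Harnack machinery, and immediately gives the sharp growth rate and the decay $m-1=O(e^{-2\eta t})$; the conjugation $Q=q_1^{-1}(-L)q_1-\eta$ and the spectral-gap energy estimate then yield exponential convergence, which is strictly stronger than what the paper proves. The paper's approach, in turn, is more ``black-box'' and would adapt more readily to situations where an explicit positive eigenfunction is not available.
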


\begin{rmk}
This result implies Theorem \ref{spinorial-pair-flow-thm} by letting $\omega(t) = e^{f(t)} \hat{\omega}$.
\end{rmk}

\par We note that if $u(x,0) \geq 0$ at the initial time, then by the strong maximum principle, for $t>0$ we have $u(x,t)>0$. Indeed, let $B \gg 1$ be such that $2(-\kappa) - B \leq 0$. Let $u_{B} =e^{-B t} u$. Then using the evolution of $u$ (\ref{flow-u}) we obtain the evolution of $u_{B}$:
\[
\p_t u_{B} - a^{z \bar{z}} \p_z \p_{\bar{z}} u_{B} - \left( - B + (-\kappa) (1 -  {\alpha' \over 2} \kappa e^{-2f}) \right) u_{B} =0.
\]
By (\ref{u-geq-0}) and choice of $B$, we have
\[
\bigg( - B + (-\kappa) (1 -  {\alpha' \over 2} \kappa e^{-2f}) \bigg) \leq 0.
\]
Therefore we may apply the strong maximum principle \cite{nirenberg1953} to conclude either $u_B >0$ for all $t>0$ or $u_B \equiv 0$. But $u$ cannot be identically zero by its definition, since at a branch point $p$ of $\varphi$ we have $\kappa(p) =0$ and $u(p) = e^f(p)$. This implies $u>0$ for all $t>0$.
\medskip
\par Therefore, after only considering times greater than a fixed small time $t_0>0$, we may assume that $u>2\delta$ along the flow, which means in terms of $f$ that
\be \label{delta-big}
e^{f} > \sqrt{ {\alpha' \over 2} (-\kappa)} + \delta.
\ee
for some $\delta>0$. This provides a uniform upper bound for $e^{-f}$, and we can apply the long-time existence criterion (Theorem \ref{long-time-criterion}) to conclude that the flow exists for all time $t \in [0,\infty)$.
\smallskip
\par Though we now have a solution for all time $t \in [0,\infty)$, we will obtain more refined estimates to understand its behavior at infinity. In the following sections, we use the standard convention that constants $C$ depending on known quantities may change line by line.

\subsection{Integral growth}
\par Let $q_1$ be the first eigenfunction of the operator $-\Delta_{\hat{\omega}} + 2\kappa$ with eigenvalue $\lambda_1$. It is well-known that $q_1 > 0$ and $\lambda_1 < 0$. To avoid sign confusion, we let $0< \eta = - {\lambda_1 \over 2}$. Our first estimate concerns the exponential growth of the integral $\int_\Sigma e^f$.

\begin{prop}
Let $\delta>0$, and start the flow with $u(x,0) > 2 \delta$. Then there exists a constant $C>1$ depending on $(\Sigma,\varphi)$, $\alpha'$ and $\delta$ such that
\be \label{int-e^f-growth}
C^{-1} e^{\eta t} \leq \int_\Sigma e^f \leq C e^{\eta t}.
\ee
\end{prop}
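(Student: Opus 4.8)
The goal is the two-sided exponential bound $C^{-1} e^{\eta t} \leq \int_\Sigma e^f \leq C e^{\eta t}$, where $\eta = -\lambda_1/2 > 0$ and $\lambda_1$ is the first eigenvalue of $-\Delta_{\hat\omega} + 2\kappa$. The idea is to compare the evolution of $\int_\Sigma e^f q_1$ against the linearized equation, since $q_1$ is the eigenfunction that governs the slowest decay/fastest growth mode. First I would compute the time derivative of $\int_\Sigma e^f q_1$ using equation (\ref{flow-ef}): differentiating under the integral and using $\partial_t e^f = \hat g^{z\bar z}\partial_z\partial_{\bar z}(e^f + \tfrac{\alpha'}{2}\kappa e^{-f}) - \kappa(e^f + \tfrac{\alpha'}{2}\kappa e^{-f})$, then integrating by parts to move the Laplacian onto $q_1$. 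Since $L(q_1) = -\hat g^{z\bar z}(q_1)_{\bar z z} + \kappa q_1 = \tfrac{\lambda_1}{2} q_1$ (recall $2i\ddb u = \Delta_{\hat\omega} u\,\hat\omega$, so the complex and real normalizations differ by the expected factor), one obtains
\[
\frac{d}{dt}\int_\Sigma e^f q_1 = \int_\Sigma u\bigl(\hat g^{z\bar z}(q_1)_{\bar z z} - \kappa q_1\bigr) = -\frac{\lambda_1}{2}\int_\Sigma u\, q_1 = \eta \int_\Sigma u\, q_1,
\]
where $u = e^f + \tfrac{\alpha'}{2}\kappa e^{-f}$.

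**Closing the estimate.** Now I would use the pointwise bounds on $u$ available from (\ref{delta-big}): since $e^f > \sqrt{\tfrac{\alpha'}{2}(-\kappa)} + \delta$ holds after the initial time, we have $0 < u \le e^f$, and moreover $u \geq e^f - \tfrac{\alpha'}{2}(-\kappa)e^{-f} \geq e^f(1 - c)$ for a constant $c = c(\delta,\alpha',\kappa) < 1$ depending only on $\delta$, $\alpha'$ and the bound on $(-\kappa)$ — this uses that $e^f$ is bounded below away from zero and that $\tfrac{\alpha'}{2}(-\kappa)e^{-2f} \le 1 - \delta'$ for some $\delta' > 0$. Combining with $q_1 > 0$ and $q_1$ bounded above and below (it is a smooth positive function on the compact surface $\Sigma$), we get
\[
(1-c)\,\eta \int_\Sigma e^f q_1 \;\leq\; \frac{d}{dt}\int_\Sigma e^f q_1 \;\leq\; \eta \int_\Sigma e^f q_1,
\]
and Gronwall's inequality applied to $Y(t) = \int_\Sigma e^f q_1$ yields $Y(0)e^{(1-c)\eta t} \leq Y(t) \leq Y(0)e^{\eta t}$. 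Finally, since $c_1 \int_\Sigma e^f \leq \int_\Sigma e^f q_1 \leq c_2 \int_\Sigma e^f$ with $c_1, c_2 > 0$ from the two-sided bound on $q_1$, and $Y(0) > 0$ because $e^f > 0$, this transfers to a two-sided exponential bound on $\int_\Sigma e^f$.

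**Remark on the exponent and the main obstacle.** The naive Gronwall argument above produces the lower exponent $(1-c)\eta$ rather than $\eta$, which is weaker than claimed; the main work is to recover the sharp rate $\eta$ on both sides. The clean fix is to not throw away the correction term: write $\frac{d}{dt}\log Y(t) = \eta \, \frac{\int_\Sigma u q_1}{\int_\Sigma e^f q_1}$ and observe $\frac{\int_\Sigma u q_1}{\int_\Sigma e^f q_1} = 1 - \frac{\alpha'}{2}\frac{\int_\Sigma (-\kappa) e^{-f} q_1}{\int_\Sigma e^f q_1}$; the correction ratio is controlled by $\bigl(\int_\Sigma e^{-f}q_1\bigr)/\bigl(\int_\Sigma e^f q_1\bigr)$. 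For the upper bound $Y(t) \le Y(0)e^{\eta t}$ this is immediate since the correction has a favorable sign. For the matching lower bound with exponent exactly $\eta$, I would bound the correction ratio by $C e^{-\eta t}$ — using that the numerator $\int_\Sigma e^{-f}q_1$ stays bounded (from $e^f$ bounded below) while the denominator grows like $e^{(1-c)\eta t}$ from the crude bound, making the ratio summable in time; then $\log Y(t) \geq \log Y(0) + \eta t - C$, i.e. $Y(t) \geq C^{-1}e^{\eta t}$. Thus the genuinely delicate point is the bookkeeping that shows the reciprocal term $e^{-f}$ contributes only a time-integrable perturbation, which hinges crucially on the large-initial-data hypothesis (\ref{delta-big}); without it the argument collapses, consistent with the finite-time blow-up of Proposition \ref{finitesing}.
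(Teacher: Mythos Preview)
Your overall strategy --- compute $\frac{d}{dt}\int_\Sigma e^f q_1 = \eta \int_\Sigma u\, q_1$ via integration by parts and the eigenvalue equation, then squeeze $u$ between multiples of $e^f$ --- matches the paper, and the upper bound $Y(t) \leq Y(0)e^{\eta t}$ is obtained identically. The difference is in the lower bound. You first get the crude rate $(1-c)\eta$ from $u \geq (1-c)e^f$, then bootstrap: the correction $\frac{\alpha'}{2}\int_\Sigma(-\kappa)e^{-f}q_1 / \int_\Sigma e^f q_1$ has bounded numerator and exponentially growing denominator, hence is integrable in $t$, upgrading the exponent to $\eta$. This works (with the minor correction that the ratio is bounded by $Ce^{-(1-c)\eta t}$ rather than $Ce^{-\eta t}$, which is still integrable).

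The paper avoids the bootstrap entirely by using an \emph{additive} rather than multiplicative bound on the error. From $e^{2f} \geq \frac{\alpha'}{2}(-\kappa)$ one gets $\frac{\alpha'}{2}(-\kappa)e^{-f} \leq \sqrt{\frac{\alpha'|\kappa|}{2}}$, a fixed function independent of $t$. Hence
\[
\frac{d}{dt}\int_\Sigma e^f q_1 \;\geq\; \eta \int_\Sigma e^f q_1 \;-\; \eta \int_\Sigma q_1\sqrt{\tfrac{\alpha'|\kappa|}{2}},
\]
a linear ODE with constant forcing, so $e^{-\eta t}\bigl(\int_\Sigma e^f q_1 - K\bigr)$ is nondecreasing with $K = \int_\Sigma q_1\sqrt{\alpha'|\kappa|/2}$. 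The sharp rate then follows in one step, provided $\int_\Sigma e^{f(0)}q_1 > K$; this is exactly where the strict inequality $e^{f(0)} > \sqrt{\frac{\alpha'}{2}(-\kappa)} + \delta$ from (\ref{delta-big}) is used. Your route is sound but the paper's additive trick is cleaner and makes the role of the $\delta$-gap more transparent.
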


\begin{proof}
We first compute the evolution of the inner product of $e^f$ with $q_1$.
\bea
{d \over dt} \int_\Sigma (e^f q_1) \, \hat{\omega} &=& \int_\Sigma  q_1 \, i \ddb (e^f + {\alpha' \over 2} \kappa e^{-f})- \int_\Sigma q_1 \kappa (e^f + {\alpha' \over 2} \kappa e^{-f}) \hat{\omega} \nonumber\\
&=& \int_\Sigma (e^f + {\alpha' \over 2} \kappa e^{-f}) (i \ddb q_1 - \kappa q_1 \hat{\omega}) \nonumber\\
&=& \eta \int_\Sigma  q_1 (e^f + {\alpha' \over 2} \kappa e^{-f}) \hat{\omega}. \nonumber
\eea
We will often omit the volume form $\hat{\omega}$ when integrating. Since $q_1 \geq 0$ and $\kappa \leq 0$, we have
\[ {d \over dt} \int_\Sigma e^f q_1 \leq \eta \int_\Sigma e^f q_1. \]
Therefore
\[
\int_\Sigma e^f q_1 \leq C e^{\eta t}.
\]
On the other hand, by (\ref{u-geq-0}) we have
\[ {d \over dt} \int_\Sigma e^f q_1 \geq \eta \int_\Sigma q_1 e^f - \eta \int_\Sigma q_1 \sqrt{ {\alpha' |\kappa| \over 2} }  . \]
It follows that
\[
{d \over dt} \bigg( e^{-\eta t} \int_\Sigma e^f q_1 - e^{-\eta t} \int_\Sigma q_1 \sqrt{ {\alpha' |\kappa| \over 2} } \bigg) \geq 0,
\]
and integrating this differential inequality gives
\[
 \int_\Sigma e^f q_1 \geq \bigg( \int_\Sigma e^f(0) q_1 - \int_\Sigma q_1 \sqrt{ {\alpha' |\kappa| \over 2} }\bigg) e^{\eta t} +\int_\Sigma q_1 \sqrt{ {\alpha' |\kappa| \over 2} }.
\]
Using (\ref{delta-big}), we have
\[
 \int_\Sigma e^f q_1 \geq \delta \left( \int_\Sigma q_1 \right) e^{\eta t} +\int_\Sigma q_1 \sqrt{ {\alpha' |\kappa| \over 2} }.
\]
Combining both bounds on $\int e^f q_1$ gives
\[C^{-1} e^{\eta t} \leq \int_\Sigma e^f q_1 \leq C e^{\eta t}.\]
Since $q_1>0$ on $\Sigma$, we obtain the desired estimate.
\end{proof}

\subsection{Estimates}
In this section, we obtain more precise estimates for $u$ as $t \rightarrow \infty$.
\begin{prop} \label{int-u-comp}
Suppose $u> 2 \delta$ at $t=0$. There exists $T>0$ and $C>1$ depending on $(\Sigma,\varphi)$, $\alpha'$ and $\delta$ with the following property. For all $t_1,t_2 \geq T$ such that $|t_1-t_2| \leq 1$, then
\[ C^{-1} \int_\Sigma u(t_2) \leq \int_\Sigma u(t_1) \leq C \int_\Sigma u(t_2). \]
\end{prop}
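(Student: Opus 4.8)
The plan is to show that for all sufficiently large $t$ the quantity $\int_\Sigma u$ is trapped between two constant multiples of $e^{\eta t}$; once this is known, the comparison over unit time intervals is immediate. The two ingredients are the pointwise bound $e^{-f}<\delta^{-1}$ — which holds along the flow by the standing reduction $u>2\delta$ together with (\ref{delta-big}) — and the integral growth estimate (\ref{int-e^f-growth}), $C_1^{-1}e^{\eta t}\le\int_\Sigma e^f\le C_1 e^{\eta t}$, already established, with $C_1$ depending only on $(\Sigma,\varphi)$, $\alpha'$ and $\delta$.

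First I would sandwich $\int_\Sigma u$ between constant multiples of $\int_\Sigma e^f$. Since $\kappa\le 0$ we have $u\le e^f$ pointwise, so $\int_\Sigma u\le\int_\Sigma e^f\le C_1 e^{\eta t}$. For the lower bound, $u=e^f+\frac{\alpha'}{2}\kappa e^{-f}\ge e^f-\frac{\alpha'}{2\delta}\sup_\Sigma|\kappa|$, and integrating over $\Sigma$ (recall $\int_\Sigma\hat{\omega}=1$) gives $\int_\Sigma u\ge\int_\Sigma e^f-\frac{\alpha'}{2\delta}\sup_\Sigma|\kappa|$. Because $\int_\Sigma e^f\ge C_1^{-1}e^{\eta t}\to\infty$, there is a time $T>0$, depending only on $(\Sigma,\varphi)$, $\alpha'$, $\delta$, after which $\int_\Sigma e^f\ge\frac{\alpha'}{\delta}\sup_\Sigma|\kappa|$, whence $\int_\Sigma u\ge\frac12\int_\Sigma e^f\ge\frac{1}{2C_1}e^{\eta t}$ for $t\ge T$. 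Thus $\frac{1}{2C_1}e^{\eta t}\le\int_\Sigma u(t)\le C_1 e^{\eta t}$ for all $t\ge T$, and for $t_1,t_2\ge T$ with $|t_1-t_2|\le 1$,
\[
\int_\Sigma u(t_1)\le C_1 e^{\eta t_1}\le C_1 e^{\eta}\,e^{\eta t_2}\le 2C_1^2 e^{\eta}\int_\Sigma u(t_2),
\]
and symmetrically in $t_1,t_2$, so $C=2C_1^2 e^\eta$ works.

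There is no real obstacle here beyond bookkeeping: one just has to check that $T$ and $C$ depend only on the permitted data, which is automatic since $\eta$, $\sup_\Sigma|\kappa|$ and $C_1$ all do. I would avoid the alternative of estimating $\frac{d}{dt}\int_\Sigma u$ directly, since integrating (\ref{flow-u}) by parts produces terms involving $\nabla f$ and $\nabla\kappa$ coupled to the variable coefficient $1-\frac{\alpha'}{2}\kappa e^{-2f}$ that are not convenient to bound pointwise; routing everything through the already-controlled quantity $\int_\Sigma e^f$ is much cleaner.
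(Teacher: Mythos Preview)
Your argument is correct and follows essentially the same route as the paper: both use the bound $e^{-f}<\delta^{-1}$ from (\ref{delta-big}) together with the growth estimate (\ref{int-e^f-growth}) to sandwich $\int_\Sigma u$ between constant multiples of $e^{\eta t}$ for $t$ large, and then read off the comparison over unit time intervals. The paper records this as $C^{-1}e^{\eta t}-C\le\int_\Sigma u\le Ce^{\eta t}$ and then absorbs the additive constant for $t\ge T$, which is exactly what you do.
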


\begin{proof}
Since $u = e^f + \frac{\alpha'}{2} e^{-f}\kappa$, by the growth of $\int_\Sigma e^f$ (\ref{int-e^f-growth}) and the upper bound of $e^{-f}$ (\ref{delta-big}), we have
\[ C^{-1} e^{\eta t} - C \leq \int_\Sigma u \leq C e^{\eta t}, \]
for all $t \in [0,\infty)$. It follows that there exists $T>0$ such that for all $t \geq T$, then
\be \label{int-u-growth}
{C^{-1} \over 2} e^{\eta t} \leq \int_\Sigma u \leq C e^{\eta t}.
\ee
The desired estimate follows.
\end{proof}

\begin{prop} \label{u-vs-L2}
Start the flow with $u(x,0) > 2 \delta$. Then there exists $T>0$ and $C>1$ depending on $(\Sigma,\varphi)$, $\alpha'$ and $\delta$ such that
\[C^{-1} \left( \int_\Sigma u^2 \right)^{1/2} \leq u(x,t) \leq C \left( \int_\Sigma u^2 \right)^{1/2}, \]
for all $t \geq T$.
\end{prop}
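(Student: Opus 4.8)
The plan is to establish a two-sided comparison between the pointwise values of $u$ and its $L^2$ norm by combining a Moser iteration (or Harnack-type) estimate with the conservation law and the integral growth already in hand. First I would record that, after restarting the flow at time $t_0>0$, we have the uniform bounds $0 < \delta \leq e^{-f} \leq C$ and hence $\hat g^{z\bar z} \leq a^{z\bar z} \leq \Lambda \hat g^{z\bar z}$, $0 \leq -\kappa a^{z\bar z}\hat g_{\bar z z} \leq \Lambda$, so that the evolution equation (\ref{flow-u-2}) for $u$ is uniformly parabolic with bounded zeroth-order coefficient. Since the zeroth-order term $-(\kappa a^{z\bar z}\hat g_{\bar z z})u$ is nonnegative (recall $\kappa \leq 0$ and $u>0$), $u$ is a nonnegative subsolution of a uniformly parabolic operator after multiplying by $e^{-\Lambda t}$, and a nonnegative supersolution on its own.

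The upper bound $u(x,t) \leq C(\int_\Sigma u^2)^{1/2}$ I would obtain from the parabolic mean-value inequality (De Giorgi–Nash–Moser): for a subsolution on a parabolic cylinder $\Sigma \times [t-1, t]$ (with $t \geq T := t_0+1$),
\[
\sup_{\Sigma \times [t-\frac12, t]} u \leq C \bigg( \int_{t-1}^{t} \int_\Sigma u^2 \bigg)^{1/2},
\]
and then I would convert the spacetime integral on the right into $(\int_\Sigma u^2(t))^{1/2}$ at a single time slice using Proposition \ref{int-u-comp}: the $L^1$ norms $\int_\Sigma u(s)$ are all comparable for $s \in [t-1,t]$, and with the uniform $L^\infty$ bound on $u$ (which follows from (\ref{delta-big}) and Theorem \ref{long-time-criterion}) one upgrades this to comparability of $\int_\Sigma u^2(s)$ across nearby time slices; alternatively one uses Moser iteration directly with $L^2$ in space and integrates. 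For the lower bound $u(x,t) \geq C^{-1}(\int_\Sigma u^2(t))^{1/2}$, I would apply the parabolic Harnack inequality for nonnegative supersolutions of the uniformly parabolic operator, which gives $\inf_{\Sigma \times [t-\frac12,t]} u \geq C^{-1}\sup_{\Sigma\times[t-1,t-\frac12]} u \geq C^{-1}\,\|u(t-1)\|_{L^\infty}$; combining with the upper bound and the time-slice comparability of $\int_\Sigma u^2$ finishes the estimate. Equivalently, since $\|u(t)\|_{L^2} \leq \|u(t)\|_{L^\infty}^{1/2}\|u(t)\|_{L^1}^{1/2}$ and $\|u(t)\|_{L^1}$ is comparable to $\|u(t)\|_{L^2}^2 / \|u(t)\|_{L^\infty}$ only up to the Harnack constant, the point is that the elliptic-type Harnack makes $\inf u$, $\sup u$, $\|u\|_{L^1}^{1/2}$ and $\|u\|_{L^2}^{1/2}$ all mutually comparable on a unit time window.

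The main obstacle, I expect, is bookkeeping the dependence of all the constants only on $(\Sigma,\varphi)$, $\alpha'$ and $\delta$ and ensuring the time $T$ is chosen after the parabolicity constants stabilize — one needs $e^{-f}$ bounded, which requires $t \geq t_0>0$, and one needs the integral growth (\ref{int-e^f-growth}) and the $L^1$-comparison Proposition \ref{int-u-comp}, which already fixed a threshold $T$. A minor technical point is that $\kappa$ vanishes at branch points of $\varphi$, so the operator is genuinely uniformly parabolic (the coefficient $a^{z\bar z}$ is pinched between $\hat g^{z\bar z}$ and $\Lambda \hat g^{z\bar z}$) with no degeneration there; this is already used in the proof of Theorem \ref{long-time-criterion} and I would simply invoke the same Krylov–Safonov / De Giorgi–Nash–Moser machinery cited there.
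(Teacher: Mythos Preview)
Your overall strategy---local maximum principle for the upper bound, weak Harnack for the lower bound, and Proposition \ref{int-u-comp} to pass between time slices---is exactly the paper's approach. However, there is a genuine gap in one step, and a small slip in the setup.

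The slip: you write $0<\delta\leq e^{-f}\leq C$, but in fact $e^{-f}$ has no positive lower bound (it decays like $e^{-\eta t}$, cf.\ (\ref{int-e^f-growth})). What (\ref{delta-big}) gives is only $e^{-f}\leq C(\delta)$, which is all you need for uniform parabolicity of $a^{z\bar z}$.

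The gap: you claim a ``uniform $L^\infty$ bound on $u$'' from (\ref{delta-big}) and Theorem \ref{long-time-criterion}, and use it to upgrade $L^1$-comparability of $u$ across a unit time window to $L^2$-comparability. No such uniform bound exists: $u$ grows like $e^{\eta t}$, and Theorem \ref{long-time-criterion} only bounds $u$ on \emph{finite} time intervals, with a constant that blows up with $t$. So this step fails as written.

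The fix is exactly what the paper does: apply the local maximum principle (Lieberman, Theorem 7.36) with exponent $p=1$ rather than $p=2$. This gives $\sup_\Sigma u(t_0)\leq C\int_\Sigma u(t_0)$ directly, using only Proposition \ref{int-u-comp} to absorb the time integral. From this one deduces $\|u\|_{L^2}^2\leq (\sup u)\|u\|_{L^1}\leq C\|u\|_{L^1}^2$, so $\|u\|_{L^1}$ and $\|u\|_{L^2}$ are comparable at each time; then Proposition \ref{int-u-comp} gives $L^2$-comparability across nearby times, and both halves of your Harnack argument go through. For the lower bound, note also that the weak Harnack inequality is local in coordinate balls, so one needs a short chaining argument over a finite cover of $\Sigma$ (and hence a controlled backward shift in time) before it yields $\inf_\Sigma u\geq C^{-1}\|u\|_{L^2}$; the paper carries this out explicitly.
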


\begin{proof}
Fix $t_0 \in (T,\infty)$, where $T$ is as in Proposition \ref{int-u-comp}. For the following arguments, we will assume that $T \gg 1$. Let $n$ be a real number such that $t_0 \in [n+\frac{1}{2},n+1]$. As before, we have
\[ (\p_t - a^{z \bar{z}} \p_z \p_{\bar{z}}) \, e^{-B (t-t_0)} u  \leq 0,\]
for $B \geq 2 \sup_\Sigma |\kappa|$ and $\hat{g}^{z \bar{z}} \leq a^{z \bar{z}} \leq \Lambda \hat{g}^{z \bar{z}}$. By the local maximum principle \cite[Theorem 7.36]{lieberman1996}, for every $p>0$ there exists a uniform $C>0$ such that in a local coordinate ball $B_1$ there holds
\be \label{supu-vs-intu}
\sup_{B_{1/2} \times [n+\frac{1}{2},n+1]} e^{-B (t-t_0)} u \leq C \bigg( \int_{n}^{n+1} \int_{B_1} (e^{-B (t-t_0)} u)^p \bigg)^{1/p}.
\ee
Let us take $p=1$, and center this coordinate chart around a point $p \in \Sigma$ where $u(x,t_0)$ attains its maximum. Since $\int_\Sigma u$ is comparable at all nearby times by Proposition \ref{int-u-comp},
\[ \sup_{\Sigma} u(t_0) \leq C \int_\Sigma u (t_0). \]
It follows that for all $t>T$, then
\[C^{-1}\| u \|_{L^1(\Sigma)}(t) \leq \| u \|_{L^2(\Sigma)}(t) \leq C \| u \|_{L^1(\Sigma)} (t).\]
Hence by Proposition \ref{int-u-comp}, $\| u \|_{L^2(\Sigma)}$ is also comparable at all nearby times. Stated explicitly, for $t_1,t_2 \geq T$ and $|t_2-t_1| \leq 1$, then
\be \label{comp-L2}
C^{-1} \| u \|_{L^2(\Sigma)} (t_2) \leq \| u \|_{L^2(\Sigma)} (t_1) \leq C \| u \|_{L^2(\Sigma)} (t_2).
\ee
Next, choosing $t_0 \in (T,\infty)$ and $t_0 \in [n,n+1]$, we observe
\[ (\p_t - a^{z \bar{z}} \p_z \p_{\bar{z}}) \, e^{B (t-t_0)} u \geq 0.\]
Cover $\Sigma$ with finitely many local coordinate balls $U_i$. By the weak Harnack inequality \cite[Theorem 7.37]{lieberman1996}, for some $p>0$ there holds
\[ \inf_{U_i \times [n,n+1]} e^{B (t-t_0)} u \geq C^{-1} \bigg( \int_{n-2}^{n-1} \int_{U_i} (e^{B (t-t_0)}u)^p \bigg)^{1/p}. \]
Suppose the infimum of $e^{B (t-t_0)} u$ on $\Sigma \times [n,n+1]$ is attained in $U_1$. Let $U_2$ be another chart such that $U_1 \cap U_2 \neq \emptyset$. Then
\bea
\bigg( \int_{n-4}^{n-3} \int_{U_2} (e^{B (t-t_0)}u)^p \bigg)^{1/p} &\leq& C \inf_{U_2 \times [n-2,n-1]} e^{B (t-t_0)} u \nonumber\\
&\leq&  C \inf_{U_2 \cap U_1 \times [n-2,n-1]} e^{B (t-t_0)} u \nonumber\\
&\leq& C \bigg( \int_{n-2}^{n-1} \int_{U_1} (e^{B (t-t_0)}u)^p \bigg)^{1/p} \nonumber\\
&\leq& C \inf_{\Sigma \times [n,n+1]} e^{B (t-t_0)} u. \nonumber
\eea
There exists a uniform $m_0>0$ depending on the covering $\Sigma \subseteq \bigcup U_i$ such that after applying this argument $m_0$ times, we can deduce
\[ \inf_{\Sigma \times [n,n+1]} e^{B (t-t_0)} u \geq C^{-1} \bigg( \int_{n-(m_0+1)}^{n-m_0} \int_\Sigma (e^{B (t-t_0)}u)^p \bigg)^{1/p}. \]
We can assume that $k_0=n -m_0 > 1$ since $T \gg 1$. By (\ref{supu-vs-intu}), we obtain
\[ \bigg( \int_{k_0-1}^{k_0} \int_\Sigma (e^{B (t-t_0)}u)^p \bigg)^{1/p} \geq C^{-1} \sup_{\Sigma \times [k_0-1/2,k_0]} u \geq C^{-1} \bigg( \int_{k_0-1/2}^{k_0} \int_\Sigma u^2 \bigg)^{1/2}.\]
Combining these estimates
\[ \inf_{\Sigma} u(t_0) \geq \inf_{\Sigma \times [n,n+1]} u \geq C^{-1} \bigg( \int_{n-m_0-1/2}^{n-m_0} \int_\Sigma u^2 \bigg)^{1/2}. \]
By (\ref{comp-L2}), we see that $\int_\Sigma u^2$ is comparable at all times in a bounded interval, hence
\[ \inf_{\Sigma} u(t_0) \geq C^{-1} \| u \|_{L^2(\Sigma)} (t_0). \]

\end{proof}

We now introduce the normalized function
\be \label{v-defn}
v(x,t) = {u(x,t) \over \| u \|_{L^2(\Sigma)} (t)}.
\ee
We have established that for $t \geq T$,
\[ C^{-1} \leq v(x,t) \leq C.\]
We now obtain uniform higher order estimates for $v$.
\begin{prop} \label{higher-est-v}
Suppose $u>2 \delta$ at $t=0$. There exists $T>0$ depending on $(\Sigma,\varphi)$, $\alpha'$ and $\delta$ with the following property. For each $k$, there exists $C_k>0$ depending on $(\Sigma,\varphi)$, $\alpha'$ and $\delta$ such that the normalized function $v = u / \| u \|_{L^2(\Sigma)}$ can be estimated by
\[ \| v \|_{C^k(\Sigma)}(t) \leq C_k, \]
for any $t \in (T,\infty)$.
\end{prop}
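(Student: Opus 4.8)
The plan is to bootstrap from the $C^0$ bound $C^{-1} \leq v \leq C$ (valid for $t \geq T$, from Proposition \ref{u-vs-L2}) up to uniform $C^k$ bounds, using parabolic regularity applied to a rescaled flow. First I would write down the evolution equation satisfied by $v = u / \|u\|_{L^2(\Sigma)}$. From (\ref{flow-u-2}) we have $\p_t u = a^{z\bar z} u_{\bar z z} - (\kappa a^{z\bar z} \hat g_{\bar z z}) u$ with $a^{z\bar z} = (1 - \frac{\alpha'}{2}\kappa e^{-2f})\hat g^{z\bar z}$, and since $\|u\|_{L^2}(t)$ is a function of $t$ alone, $v$ satisfies
\[
\p_t v = a^{z\bar z} v_{\bar z z} - (\kappa a^{z\bar z}\hat g_{\bar z z}) v - \frac{d}{dt}\big(\log \|u\|_{L^2(\Sigma)}\big) \, v.
\]
Using $\frac{d}{dt}\int_\Sigma u^2 = 2\int_\Sigma u\,\p_t u = -2\int_\Sigma a^{z\bar z} u_z u_{\bar z} - 2\int_\Sigma \kappa a^{z\bar z}\hat g_{\bar z z} u^2$, the logarithmic derivative is bounded once we control $\int_\Sigma |\p v|^2$; an energy estimate (multiply the $v$-equation by $v$ and integrate, exploiting $a^{z\bar z}\geq \hat g^{z\bar z}$ and the $C^0$ bound on $v$, over a unit time interval as in the proof of the local estimate in \S\ref{general-properties}) gives $\int_{t}^{t+1}\int_\Sigma |\p v|^2 \leq C$, hence $|\frac{d}{dt}\log\|u\|_{L^2}| \leq C$ in an averaged sense, which is enough.

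Next, the coefficients: $a^{z\bar z}$ and the zeroth-order coefficient $\kappa a^{z\bar z}\hat g_{\bar z z}$ depend on $f$ only through $e^{-2f}$, and by (\ref{delta-big}) we have $e^{-f}$ bounded above uniformly on $[t_0,\infty)$; combined with $\hat g^{z\bar z}\leq a^{z\bar z}\leq\Lambda\hat g^{z\bar z}$, the $v$-equation is uniformly parabolic with bounded measurable coefficients and bounded solution. Krylov--Safonov \cite{krylov1980,krylov1981} then gives a uniform interior $C^{\alpha,\alpha/2}$ bound for $v$ on $\Sigma\times[t_0+1,\infty)$. Since $e^f = \frac{1}{2}(u + \sqrt{u^2 - 2\alpha'\kappa})$ and $u = \|u\|_{L^2} v$ with $e^f$ also sandwiched between positive constants times $\|u\|_{L^2}$ (by Proposition \ref{u-vs-L2} and (\ref{delta-big})), the rescaled conformal factor $\tilde f := f - \log\|u\|_{L^2}$ inherits a $C^{\alpha,\alpha/2}$ bound, and therefore so do the coefficients $a^{z\bar z}$ and $\kappa a^{z\bar z}\hat g_{\bar z z}$ (these are smooth functions of $e^{-2\tilde f}$ and the fixed background data, since the $\|u\|_{L^2}$-factors cancel: $\kappa e^{-2f} = \kappa e^{-2\tilde f}\|u\|_{L^2}^{-2}$ — here one must be slightly careful, so instead I would phrase the coefficients directly in terms of $e^{-2f}$, which is bounded in $C^{\alpha,\alpha/2}$ because $e^{-2f} = 4(u+\sqrt{u^2-2\alpha'\kappa})^{-2}$ is a Hölder function of the Hölder-bounded, uniformly-positive-and-bounded quantity $u$). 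With Hölder coefficients, the parabolic Schauder estimates \cite{krylov1996} upgrade $v$ to a uniform $C^{2+\alpha,1+\alpha/2}$ bound on $\Sigma\times[t_0+2,\infty)$, and differentiating the equation and iterating (a standard bootstrap: each derivative of the equation has coefficients controlled by the previous stage's estimates, all of which are $t$-translation-uniform) yields $\|v\|_{C^k(\Sigma)}(t)\leq C_k$ for all $t$ beyond a suitable $T$.

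The main obstacle is the zeroth-order term $-\frac{d}{dt}(\log\|u\|_{L^2})\,v$ in the $v$-equation: one needs a genuine pointwise-in-time bound on $\frac{d}{dt}\log\|u\|_{L^2}$ to invoke Schauder (Krylov--Safonov tolerates merely $L^\infty$ zeroth-order coefficients, but Schauder wants Hölder-in-time data). I would handle this either by (i) showing $\int_\Sigma |\p v|^2$ is itself uniformly bounded pointwise in $t$ — a first consequence of the Krylov--Safonov $C^\alpha$ bound is not enough, but once we have the $C^{1,\alpha}$-type gradient bound from an intermediate step the logarithmic derivative becomes Hölder in $t$ automatically; or (ii) absorbing the term: write $w = e^{-\int_T^t \psi(s)ds} u$ where $\psi(t) = \frac{d}{dt}\log\|u\|_{L^2}$, so $w$ solves the honest parabolic equation with no extra zeroth-order term, get uniform $C^k$ bounds on $w$, and note $v = w \cdot e^{\int_T^t\psi}/\|u\|_{L^2}(t) = w/\|w\|_{L^2}(t)$, whence spatial $C^k$ norms of $v$ equal those of $w$ divided by $\|w\|_{L^2}$, which is comparable to $1$ since $v$ has unit $L^2$ norm and $w$ is comparable to $v$. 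This reformulation (ii) is the cleanest route and sidesteps the time-regularity issue entirely, so that is the approach I would carry out.
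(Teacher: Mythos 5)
Your overall strategy --- Krylov--Safonov for $C^\alpha$, then Schauder with H\"older coefficients, then bootstrap --- is exactly the paper's, and you correctly identify the central technical obstacle: the time-dependent factor $\| u \|_{L^2(\Sigma)}(t)$ introduces a zeroth-order term $-\psi(t)\,v$ into the $v$-equation that is a priori only $L^\infty$ in $t$, which Schauder will not tolerate. Your route (ii) is the right idea in spirit, but as written it contains a sign error that defeats the construction: since $e^{-\int_T^t \psi\,ds} = \| u \|_{L^2(\Sigma)}(T)/\| u \|_{L^2(\Sigma)}(t)$, the quantity $w = e^{-\int_T^t\psi}u$ is just $\| u \|_{L^2(\Sigma)}(T)\cdot v$, a \emph{constant} multiple of $v$, so it satisfies the same equation as $v$ (extra term and all), not the ``honest'' one. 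What does satisfy $\p_t w = a^{z\bar z}w_{\bar z z} - (\kappa a^{z\bar z}\hat g_{\bar z z})w$ is $u$ divided by a \emph{time-independent} constant, namely $w := u/\| u \|_{L^2(\Sigma)}(t_0)$ for a frozen anchor time $t_0$ --- and this is precisely the normalization the paper uses.

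Even once the sign is fixed, a second gap remains: $u/\| u \|_{L^2(\Sigma)}(T)$ grows like $e^{\eta(t-T)}$, so no global-in-time $C^k$ bound on it is possible, and ``get uniform $C^k$ bounds on $w$'' cannot be taken literally. The paper resolves this by working locally in time: for each $t_0>T$ one picks a unit window $[n,n+1]\ni t_0$, sets $w = u/\| u \|_{L^2(\Sigma)}(t_0)$, and uses (\ref{comp-L2}) together with Proposition~\ref{u-vs-L2} to obtain $C^{-1}\leq w\leq C$ on $\Sigma\times[n,n+1]$ with constants independent of $t_0$. Krylov--Safonov then gives a $C^{\delta,\delta/2}$ bound on $w$ there, uniform in $t_0$; evaluating at $t_0$ bounds $v(\cdot,t_0) = w(\cdot,t_0)$. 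Relatedly, your claim that $e^{-2f}$ is ``a H\"older function of the H\"older-bounded, uniformly-positive-and-bounded quantity $u$'' is not quite right since $u$ is not bounded above --- the paper instead estimates $\| e^{-f} \|_{C^{\delta,\delta/2}(\Sigma\times[n,n+1])}$ directly from the H\"older modulus of $u$ (of size $\| u \|_{L^2(\Sigma)}(t_0)$) combined with the lower bound $e^f \geq u \gtrsim \| u \|_{L^2(\Sigma)}(t_0)$, which cancels the growth. Your route~(i) is not pursued in the paper and would need more justification at the intermediate step; the fixed-anchor normalization avoids the issue cleanly.
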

\begin{proof}
Let $T$ be as in the proof of Proposition \ref{u-vs-L2}. We fix $t_0 \in (T,\infty)$, $t_0 \in [n,n+1]$ as before and consider
\[ w = {u(x,t) \over \| u \|_{L^2(\Sigma)}(t_0)}. \]
By (\ref{comp-L2}), we have the estimate
\[ C^{-1} \leq w(x,t) \leq C \]
for $t \in [n,n+1]$ and $w$ satisfies
\[ \p_t w - a^{z \bar{z}} w_{\bar{z} z} + (\kappa a^{z \bar{z}} \hat{g}_{\bar{z} z}) w = 0, \ \ \hat{g}^{z \bar{z}} \leq a^{z \bar{z}} \leq \Lambda \hat{g}^{z \bar{z}}, \ \  0 \leq (-\kappa a^{z \bar{z}} \hat{g}_{\bar{z} z}) \leq \Lambda. \]
By the Krylov-Safonov theorem \cite{krylov1980,krylov1981}, there exists $\delta>0$ such that
\[ \| w \|_{C^{\delta,\delta/2}(\Sigma \times [n,n+1])} \leq C. \]
The H\"older norm of $e^f = \frac{1}{2}(u+\sqrt{u^2-2\alpha'\kappa})$ on $\Sigma \times [n,n+1]$ can now be estimated by a constant times $ \| u \|_{L^2(\Sigma)}(t_0)$. For $x,y$ in the same coordinate chart and $t,s \in [n,n+1]$, we have
\[ |e^{-f(x,t)} - e^{-f(y,s)}| \leq {C \| u \|_{L^2(\Sigma)}(t_0) (|x-y|+|t-s|^{1/2})^\delta \over e^{f(x,t)} e^{f(y,s)}} \leq 2 C  {1 \over  e^{f(x,t)}} {\| u \|_{L^2(\Sigma)}(t_0) \over u(y,s)}(|x-y|+|t-s|^{1/2})^\delta . \]
Thus we have $\| e^{-f} \|_{ C^{\delta,\delta/2} (\Sigma \times [n,n+1]) } \leq C$. This implies a H\"older estimate for $a^{z \bar{z}}$, and we may apply Schauder estimates \cite{krylov1996} to bound $w$ uniformly in $C^{2+\delta,1+\delta/2}(\Sigma \times [n,n+1])$. Higher order estimates follow by a bootstrap argument.
\smallskip
\par We have obtained estimates on spacial derivatives of $u$ on the time interval $[n,n+1]$ in terms of $\| u \|_{L^2(\Sigma)}(t_0)$. By (\ref{comp-L2}), it follows that $\| v \|_{C^k(\Sigma)}(t) \leq C_k$ uniformly.
\end{proof}

Our last estimate concerns the function $f$, and is a consequence of our work so far.
\begin{prop}\label{smallcurv}
Suppose $u>2 \delta$ at $t=0$. There exists $T>0$ depending on $(\Sigma,\varphi)$, $\alpha'$ and $\delta$ with the following property. For each integer $k$, there exists $C_k>0$ depending on $(\Sigma,\varphi)$, $\alpha'$ and $\delta$ such that on $(T,\infty)$,
\be \label{exp(-f)-decay}
 e^{-f} \leq C_0 e^{-\eta t}, \ \ \ \| \nabla^k f \|_{L^\infty(\Sigma \times (T,\infty))} \leq C_k \ \mbox{for $k \geq 1$}.
\ee
\end{prop}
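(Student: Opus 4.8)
The plan is to combine the growth estimate $C^{-1}e^{\eta t}\le \int_\Sigma e^f\le Ce^{\eta t}$ with the uniform bounds on the normalized function $v=u/\|u\|_{L^2}$ obtained in Propositions \ref{u-vs-L2} and \ref{higher-est-v}, translating everything back to $f$ via the algebraic relation $e^f=\frac12(u+\sqrt{u^2-2\alpha'\kappa})$. First I would establish the decay of $e^{-f}$. From $\|u\|_{L^2(\Sigma)}(t)\le C\int_\Sigma u\le Ce^{\eta t}$ (using Proposition \ref{int-u-comp} and the comparison of $L^1$ and $L^2$ norms in the proof of Proposition \ref{u-vs-L2}) together with the lower bound $u(x,t)\ge C^{-1}\|u\|_{L^2(\Sigma)}(t)$ of Proposition \ref{u-vs-L2}, I get a pointwise lower bound $u(x,t)\ge c\,e^{\eta t}\to\infty$. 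Since $u=e^f+\frac{\alpha'}{2}\kappa e^{-f}\le e^f$, this forces $e^f\ge c\,e^{\eta t}$, hence $e^{-f}\le C_0 e^{-\eta t}$, which is the first claim in (\ref{exp(-f)-decay}). (Alternatively one can also get the matching upper bound $u\le Ce^{\eta t}$ from the growth of $\int_\Sigma e^f$ and the bound $e^{-f}<\infty$, so $u$ is genuinely comparable to $e^{\eta t}$.)

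Next I would deduce the derivative bounds on $f$. The cleanest route is to differentiate $e^f=\frac12(u+\sqrt{u^2-2\alpha'\kappa})$, or rather to write $f=\log e^f$ and bound $\nabla^k f$ in terms of $\nabla^j e^f$ and $\nabla^j e^{-f}$ for $j\le k$ by the chain/Faà di Bruno rule. The spatial derivatives of $u$ are controlled by $\|\nabla^k u\|_{L^\infty}(t)\le C_k\|u\|_{L^2(\Sigma)}(t)\le C_k e^{\eta t}$ from Proposition \ref{higher-est-v} (since $v=u/\|u\|_{L^2}$ has $\|v\|_{C^k}\le C_k$). Then $\nabla^k e^f$ has the same bound: indeed $e^f$ is a smooth function of $u$ (the square root is smooth because $u^2-2\alpha'\kappa\ge u^2\ge c^2 e^{2\eta t}>0$ is bounded away from zero, and $|\kappa|$ together with its derivatives are fixed), and the derivatives of this function of $u$ stay bounded while $u$ and its derivatives are $O(e^{\eta t})$ — one checks by induction that $\|\nabla^k e^f\|_{L^\infty}(t)\le C_k e^{\eta t}$ and similarly $\|\nabla^k e^{-f}\|_{L^\infty}(t)\le C_k e^{-\eta t}$. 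Finally $\nabla f = e^{-f}\nabla e^f$, and more generally each $\nabla^k f$ is a universal polynomial in $e^{-f},\nabla e^f,\dots,\nabla^k e^f$ (and lower $\nabla^j f$); every monomial pairs a factor that is $O(e^{-\eta t})$ (at least one $e^{-f}$) with factors that are $O(e^{\eta t})$, and a careful bookkeeping shows the exponentials cancel to give $\|\nabla^k f\|_{L^\infty(\Sigma\times(T,\infty))}\le C_k$ for $k\ge 1$. A convenient way to organize this without ad hoc counting is to rescale: on each unit time interval $[n,n+1]$ consider $\tilde u=e^{-\eta n}u$, which is uniformly bounded above and below in $C^k$ by the previous propositions; then $f=\log\frac12(\tilde u+\sqrt{\tilde u^2-2\alpha'e^{-2\eta n}\kappa})+\eta n$, so all derivatives of $f$ equal the corresponding derivatives of $\log\frac12(\tilde u+\sqrt{\tilde u^2-2\alpha' e^{-2\eta n}\kappa})$, and since $\tilde u$ is bounded in $C^k$ with $\tilde u^2-2\alpha' e^{-2\eta n}\kappa\ge \tilde u^2\ge c>0$, these are uniformly bounded independently of $n$.

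The main obstacle is the last bookkeeping step: one must verify that in the expression for $\nabla^k f$ the growth rate $e^{\eta t}$ of the $u$-derivatives is exactly compensated by the decay $e^{-\eta t}$ coming from the $e^{-f}$ prefactor, uniformly in $k$. The rescaling trick in the previous paragraph is the way I would actually present it, since it reduces the claim to the elementary fact that $\log$ and $\sqrt{\,\cdot\,}$ have all derivatives bounded on a set bounded away from the origin, with no exponential growth to track at all. Everything else — the decay of $e^{-f}$, the comparability of $u$ with $e^{\eta t}$, and the $C^k$ bounds on $v$ — has already been assembled in the preceding propositions, so the proof is essentially an assembly of those ingredients plus this rescaling observation.
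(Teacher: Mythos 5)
Your argument is essentially correct and reaches the same conclusion, though the derivative estimate is organized differently from the paper. Two comments.

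First, a small slip in the decay of $e^{-f}$: you chain together $\|u\|_{L^2}(t)\le C\int_\Sigma u\le Ce^{\eta t}$ (an \emph{upper} bound on $\|u\|_{L^2}$) with $u(x,t)\ge C^{-1}\|u\|_{L^2}(t)$ and then claim a pointwise lower bound $u\ge ce^{\eta t}$. That does not follow from those two inequalities; you need a \emph{lower} bound on $\|u\|_{L^2}$. The correct chain is $u(x,t)\ge C^{-1}\|u\|_{L^2}(t)\ge C^{-1}\int_\Sigma u\ge c\,e^{\eta t}$, where $\|u\|_{L^2}\ge\int_\Sigma u$ by Cauchy–Schwarz (recall $\int_\Sigma\hat\omega=1$) and the last step is the lower bound in (\ref{int-u-growth}). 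This is in fact the route the paper takes: it writes $e^{-f}\le u^{-1}\le C/\|u\|_{L^2}\le C/\int_\Sigma u\le Ce^{-\eta t}$, never needing the pointwise lower bound on $u$. Your parenthetical about $u$ being comparable to $e^{\eta t}$ shows you have the right picture; just fix the cited inequalities.

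Second, for the bounds on $\nabla^k f$: the paper differentiates the defining relation $u=e^f+\frac{\alpha'}{2}\kappa e^{-f}$ to get
$\bigl(e^f-\frac{\alpha'}{2}\kappa e^{-f}\bigr)\p_z f=\p_z u-\frac{\alpha'}{2}e^{-f}\p_z\kappa$,
and then observes that the right-hand side divided by $u$ is uniformly bounded since $\p_z u/u=\p_z v/v$ is controlled by Propositions \ref{u-vs-L2} and \ref{higher-est-v}, while $e^{-f}/u\to 0$; higher derivatives follow by further differentiation. Your rescaling trick — setting $\tilde u=e^{-\eta n}u$ on $[n,n+1]$, noting $\tilde u$ is uniformly bounded in $C^k$ above and below by the same propositions together with (\ref{comp-L2}) and (\ref{int-u-growth}), and then writing $f=\eta n+\log\frac12\bigl(\tilde u+\sqrt{\tilde u^2-2\alpha'e^{-2\eta n}\kappa}\bigr)$ — is a genuinely different and arguably cleaner way to package the same cancellation: rather than tracking that the $e^{\eta t}$ growth of $\nabla^k u$ is matched by the $e^{-\eta t}$ decay of $1/u$ and $e^{-f}$ term by term, you normalize once and for all so that there is nothing exponential left to track. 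Both approaches buy the same thing; yours avoids the combinatorial bookkeeping of Faà di Bruno, while the paper's is shorter because it only ever differentiates a first-order identity. Either is acceptable.
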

\begin{proof}
Since $u \leq e^f$, by Proposition \ref{u-vs-L2} we know
\[ e^{-f} \leq {C \over \| u \|_{L^2(\Sigma)}} \leq {C \over \int_\Sigma u}. \]
By (\ref{int-u-growth}), for all $t \geq T$, we have $e^{-f} \leq C e^{-\eta t}$. Next, by the definition of $u$ in terms of $f$, we note the identity
\[ u \p_z f = \p_z u - {\alpha' \over 2} e^{-f} \p_z \kappa. \]
Combining Proposition \ref{u-vs-L2} and Proposition \ref{higher-est-v}, we have a uniform bound for $\dfrac{\p_z u}{u}$, and a lower bound for $u$. It follows that $\p_z f$ is uniformly bounded. Further differentiating the identity above gives higher order estimates of $f$.
\end{proof}

\subsection{Convergence}

With the estimates obtained in the previous section, we can now show convergence of a normalization of $e^f$ along the flow, for initial data satisfying $u(x,0) > 2\delta$.
\smallskip
\par From the definition of $v$ (\ref{v-defn}) and the evolution of $u$ (\ref{flow-u}), we have the following evolution equation
\bea \label{flow-v}
\p_t v &=& (1 - {\alpha' \over 2} \kappa e^{-2f}) \hat{g}^{z \bar{z}} v_{\bar{z} z} - \kappa (1 - {\alpha' \over 2} \kappa e^{-2f}) v \nonumber\\
&&- v \int_\Sigma v \bigg( 1 - {\alpha' \over 2} \kappa e^{-2f} \bigg)  (\hat{g}^{z \bar{z}} v_{\bar{z} z} - \kappa v).
\eea
We will look at the energy of $v$ along the flow.
\[ I(v) = {1 \over 2} \int_\Sigma |\p v|^2  + {1 \over 2} \int_\Sigma \kappa v^2. \]
Differentiating gives
\bea
{d \over dt} I (v) &=& - \int_\Sigma \dot{v} \hat{g}^{z \bar{z}} v_{\bar{z} z} + \int_\Sigma \kappa v \dot{v} \nonumber\\
&=& - \int_\Sigma \dot{v} \bigg(\dot{v} + {\alpha' \over 2} \kappa e^{-2f} \hat{g}^{z \bar{z}} v_{\bar{z} z} + \kappa (1 - {\alpha' \over 2} \kappa e^{-2f}) v \bigg) \nonumber\\
&& - \int_\Sigma \dot{v} v  \int_\Sigma v \bigg( 1 - {\alpha' \over 2} \kappa e^{-2f} \bigg)  (\hat{g}^{z \bar{z}} v_{\bar{z} z} - \kappa v)+ \int_\Sigma \kappa v \dot{v} \nonumber.
\eea
From differentiating $\int_\Sigma v^2 = 1$, we see that $\int_\Sigma v \dot{v} = 0$. Therefore
\[
{d \over dt} I(v) = - \int_\Sigma \dot{v}^2 - {\alpha' \over 2} \int_\Sigma  ( \kappa e^{-2f}) (\hat{g}^{z \bar{z}} v_{\bar{z} z} - \kappa v) \, \dot{v}.
\]
By Proposition \ref{higher-est-v}, we have $\| v \|_{C^2(\Sigma)}(t) \leq C$ along the flow. By (\ref{flow-v}), we see that $\dot{v}$ is also uniformly bounded along the flow. By (\ref{exp(-f)-decay}), it follows that there exists $T>0$ such that for all $t \geq T$ then
\be \label{I-dot-est}
{d \over dt} I(v) \leq - \int_\Sigma \dot{v}^2 + C \sup_\Sigma e^{-2f} \leq  - \int_\Sigma \dot{v}^2 + C e^{-\eta t}.
\ee
We claim that as $t \rightarrow \infty$, we have that $\int_\Sigma \dot{v}^2 \rightarrow 0$. Suppose this is not the case. Then there exists a sequence $t_n \rightarrow \infty$ such that $\int_\Sigma \dot{v}^2(t_n) \geq \epsilon>0$. By our estimates,
\[ \bigg| {d \over dt} \int_\Sigma \dot{v}^2 \bigg| \leq C, \]
therefore there exists $\delta>0$ such that $\int_\Sigma \dot{v}^2 \geq \epsilon/2$ on $[t_n -\delta, t_n + \delta]$. Using (\ref{I-dot-est}), we obtain
\[ I(v)(s) - I(v)(T) = \int_T^s {d \over dt} I(v) d t \leq - \int_T^s \int_\Sigma \dot{v}^2 + C \int_T^s e^{-\eta t} dt, \]
and we see that $I(v)(s)$ is not bounded below as $s \rightarrow \infty$, which is a contradiction.
\smallskip
\par We can now show that $v$ converges smoothly to $q_1$, the first eigenfunction of the operator $-\Delta_{\hat{\omega}} + 2\kappa$. Indeed, suppose this does not hold. Then there exists a sequence of $t_i \rightarrow \infty$ such that after passing to a subsequence we have $v \rightarrow v_\infty$ smoothly and $v_\infty \neq q_1$. Applying Proposition \ref{higher-est-v} to the expression for $\dot{v}$ (\ref{flow-v}), we may use the Arzela-Ascoli theorem and assume that $\dot{v}(t_i)$ converges uniformly to some function. Since $\int_\Sigma \dot{v}^2 \rightarrow 0$, we conclude that $\dot{v}(t_i) \rightarrow 0$. Letting $t_i \rightarrow \infty$ in the evolution equation of $v$ (\ref{flow-v}), we see that
\[ \hat{g}^{z \bar{z}} (v_\infty)_{\bar{z} z} - \kappa v_\infty = \eta \, v_\infty, \ \ \eta = - \, {\int_\Sigma \kappa v_\infty \over \int_\Sigma v_\infty}, \]
with
\[ v_\infty > 0, \ \ \| v_\infty \|_{L^2(\Sigma)} = 1.\]
This identifies $v_\infty$ as $q_1$, a contradiction.
\smallskip
\par To complete the proof of Theorem \ref{u>0-conv}, we remark
\[\frac{\|u \|_{L^2(\Sigma)}}{\| e^f \|_{L^2(\Sigma)}}\to 1\]
and
\[
{e^f \over \| e^f \|_{L^2(\Sigma)}} = v\cdot\frac{\| u \|_{L^2(\Sigma)}}{\| e^f \|_{L^2(\Sigma)}} - {\alpha' \over 2} {e^{-f} \over \| e^f \|_{L^2(\Sigma)}} \kappa \rightarrow v_\infty.
\]

\subsection{Collapsing of the hyperk\"ahler fibers}

In the previous section, we gave the proof of Theorem \ref{u>0-conv}. We would like to interpret this theorem geometrically. On the threefold $X$, we are studying the evolution of the metric $\omega_f = e^{2f} \hat{\omega} + e^f \omega'$ under the Anomaly flow. By (\ref{volume-omega_f}), if we assume $\int_{T^4} d {\rm vol}_{T^4} = 1$, then
\[
{\omega_f \over {1 \over 3!} \int_X \| \Omega \|_{\omega_f} \, \omega_f^3} = \bigg( {e^f \over \| e^f \|_{L^2(\Sigma)}} \bigg)^2 \hat{\omega} + \bigg( {e^f \over \| e^f \|_{L^2(\Sigma)}} \bigg) {1 \over \| e^f \|_{L^2(\Sigma)}} \omega'.
\]
We see that if $u(x,0) \geq 0$, then as $t \rightarrow \infty$ the hyperk\"ahler fibers are collapsing and the rescaled metrics converge to the following metric on the base
\[
{\omega_f \over \frac{1}{3!} \int_X \| \Omega \|_{\omega_f} \, \omega_f^3} \rightarrow q_1^2 \, \hat{\omega}.
\]
From here, it can be established that $(X,{\omega_f \over \frac{1}{3!} \int_X \| \Omega \|_{\omega_f} \, \omega_f^3})$ converges to $(\Sigma,q_1^2 \, \hat{\omega})$ in the Gromov-Hausdorff sense; this statement can be found in (\cite[Theorem 5.23]{tosatti2015c}).
\medskip
\par The Anomaly flow has produced a limiting metric $\omega_\Sigma = q_1^2 \, \hat{\omega}$ which can be associated to a vanishing spinorial pair $(\Sigma,\varphi)$. Its curvature is given by
\[
- i \ddb \log \omega_\Sigma = - (2 \eta q_1^{-2}) \omega_\Sigma + \varphi^* \omega_{FS} + 2 i q_1^{-2} \p q_1 \wedge \bar{\p} q_1.
\]
In section \S \ref{gcg-setup}, we showed that $\varphi^* \omega_{FS}= \omega_{WP}$.

\subsection{Small curvature condition}

It was shown in \cite{phong2017} that the Anomaly flow exists for a short-time if $|\alpha' Rm(\omega_0)|$ is small initially. In this subsection, we show that under the reduction of the Anomaly to the Riemann surface, our long-time existence result (Theorem \ref{u>0-conv}) can be interpreted as the condition
\[|\alpha'Rm(\omega_f)| \ll 1\]
being preserved under the flow (\ref{flow-ef}).

The first step is to compute $|Rm(\omega_f)|$ in terms of $f$. Based on the complicated calculation in \cite{fei2016, fei2015d}, one can compute directly that
\[|Rm(\omega_f)| \sim e^{-2f}+e^{-2f}|\pt f|+e^{-2f}|\Delta_{\hat{\omega}} f|.\]

It follows that if $|\alpha'Rm(\omega_f)| \ll 1$ initially, then
\[u=e^f\left(1+\frac{\alpha'e^{-2f}}{2}\kappa\right)\geq0\]
initially, hence we have long-time existence. Moreover by Proposition \ref{smallcurv}, we deduce that the condition $|\alpha'Rm(\omega_f)| \ll 1$ is ultimately preserved under the flow and in fact this quantity decays exponentially. Hence we have proved Theorem \ref{af_collapse_thm}.

In \cite{phong2016c}, it is shown that the Anomaly flow with $\alpha'=0$ exists as long as $|Rm|^2+|DT|^2+|T|^4$ remains bounded. Here $T$ is the torsion tensor associated to the Chern connection. For our reduced flow (\ref{flow-ef}) on Riemann surfaces with $\alpha'>0$, a similar calculation indicates that \[|Rm|^2+|DT|^2+|T|^4 \sim e^{-4f}+e^{-4f}|\pt f|^4+e^{-4f}|\nabla^2 f|^2.\]
If this quantity is bounded, then in particular $e^{-f}$ remains bounded, and by Theorem \ref{long-time-criterion} the flow can be extended. This observation suggests the possibility of generalizing the long-time existence criterion in \cite{phong2016c} to the case when $\alpha'>0$.




\section{Further directions} \label{future-dir}


In this section, we will specialize to Equation (\ref{flow-ef})
\[\p_t e^f =  \hat{g}^{z \bar{z}} \p_z \p_{\bar{z}} (e^f + {\alpha' \over 2}  \kappa e^{-f} ) - \kappa  (e^f + {\alpha' \over 2} \kappa e^{-f} ),\]
which comes from the reduction of the Anomaly flow to a Riemann surface. From previous sections, we see that the behavior of this parabolic equation is very sensitive to the initial data. Indeed, for large initial data, we have the long-time existence of solutions, however the flow does not converge without normalization. On the other hand, the flow will develop a finite time singularity if the initial data is small in the $L^1$-sense. This leaves a region of medium initial data, where we have stationary points of the flow, which are the solutions to the Hull-Strominger system found in \cite{fei2017}. Therefore it is desirable to understand the behavior of this flow with medium initial data.

A subtle issue in this case is that there is an obstruction to the existence of stationary points, which comes from the ``hemisphere condition'' in our previous work \cite{fei2017}. Moreover, this hemisphere condition controls the Morse index of the Jacobi operator $-\Delta_{\hat{\omega}}+2\kappa$, which in turn gives us information about the number of unstable directions of the linearized operator of our flow at stationary points.

In principle, the obstruction of the hemisphere condition should be detected by a purely analytical understanding of our flow. Moreover, there is a kind of ``surgery'' given by composition with automorphisms of $\ccc\pp^1$ (a special form of reparametrization), which enables us to continue the flow if we encounter a singularity due to the hemisphere obstruction. This phenomenon reminds us of Hamilton's Ricci flow \cite{hamilton1982,hamilton1988,chow1991,struwe2002,phong2014} and it is an interesting problem to characterize the above geometric picture by the analytical theory of PDE. Meanwhile the freedom of reparametrization is also related to the moduli space of solutions, which is of great importance from both mathematical and string-theoretical points of view. We hope to address these questions in our future work.

\bibliographystyle{plain}

\begin{thebibliography}{10}

\bibitem{andreas2012}
B.~Andreas and M.~Garcia-Fernandez.
\newblock Solutions of the {S}trominger system via stable bundles on
  {C}alabi-{Y}au threefolds.
\newblock {\em Communications in Mathematical Physics}, 315(1):153--168, 2012.

\bibitem{calabi1958}
E.~Calabi.
\newblock Construction and properties of some 6-dimensional almost complex
  manifolds.
\newblock {\em Transactions of the American Mathematical Society},
  87(2):407--438, 1958.

\bibitem{calabi1953b}
E.~Calabi and B.~Eckmann.
\newblock A class of compact, complex manifolds which are not algebraic.
\newblock {\em Annals of Mathematics}, 58(3):494--500, 1953.

\bibitem{candelas1985}
P.~Candelas, G.T. Horowitz, A.~Strominger, and E.~Witten.
\newblock Vacuum configurations for superstrings.
\newblock {\em Nuclear Physics B}, 258(1):46--74, 1985.

\bibitem{cao1985}
H.-D. Cao.
\newblock Deformation of {K}\"ahler matrics to {K}\"ahler-{E}instein metrics on
  compact {K}\"ahler manifolds.
\newblock {\em Inventiones Mathematicae}, 81(2):359--372, 1985.

\bibitem{chow1991}
B.~Chow.
\newblock The {R}icci flow on the 2-sphere.
\newblock {\em Journal of Differential Geometry}, 33(2):325--334, 1991.

\bibitem{edwards2017}
G.~Edwards.
\newblock Metric contraction of the cone divisor by the conical
  {K}\"ahler-{R}icci flow.
\newblock {\em arXiv: 1704.00360}, 2017.

\bibitem{fang2016}
S.-W. Fang, V.~Tosatti, B.~Weinkove, and T.~Zheng.
\newblock Inoue surfaces and the {C}hern-{R}icci flow.
\newblock {\em Journal of Functional Analysis}, 271(11):3162--3185, 2016.

\bibitem{fei2015d}
T.~Fei.
\newblock Some torsional local models of heterotic strings.
\newblock {\em arXiv: 1508.05566, to appear in Comm. Anal. Geom.}, 2015.

\bibitem{fei2015b}
T.~Fei.
\newblock Stable forms, vector cross products and their applications in
  geometry.
\newblock {\em arXiv: 1504.02807}, 2015.

\bibitem{fei2016}
T.~Fei.
\newblock A construction of non-{K}\"ahler {C}alabi-{Y}au manifolds and new
  solutions to the {S}trominger system.
\newblock {\em Advances in Mathematics}, 302:529--550, 2016.

\bibitem{fei2016b}
T.~Fei.
\newblock {\em On the geometry of the {S}trominger system}.
\newblock PhD thesis, Massachusetts Institute of Technology, 2016.

\bibitem{fei2017b}
T.~Fei and Z.-J. Huang.
\newblock An estimate of the first eigenvalue of a {S}chr\"{o}dinger operator
  on closed surfaces.
\newblock {\em arXiv: 1704.05418, to appear in Proc. Amer. Math. Soc.}, 2017.

\bibitem{fei2017}
T.~Fei, Z.-J. Huang, and S.~Picard.
\newblock A construction of infinitely many solutions to the {S}trominger
  system.
\newblock {\em arXiv: 1703.10067}, 2017.

\bibitem{fei2015}
T.~Fei and S.-T. Yau.
\newblock Invariant solutions to the {S}trominger system on complex {L}ie
  groups and their quotients.
\newblock {\em Communications in Mathematical Physics}, 338(3):1--13, 2015.

\bibitem{fernandez2014}
M.L. Fern\'andez, S.~Ivanov, L.~Ugarte, and D.~Vassilev.
\newblock Non-{K}aehler heterotic string solutions with non-zero fluxes and
  non-constant dilaton.
\newblock {\em Journal of High Energy Physics}, 2014(6):1--23, 2014.

\bibitem{fernandez2009}
M.L. Fern\'andez, S.~Ivanov, L.~Ugarte, and R.~Villacampa.
\newblock Non-{K}aehler heterotic string compactifications with non-zero fluxes
  and constant dilaton.
\newblock {\em Communications in Mathematical Physics}, 288(2):677--697, 2009.

\bibitem{fong2015}
F.T.-H. Fong and Z.~Zhang.
\newblock The collapsing rate of the {K}\"ahler-{R}icci flow with regular
  infinite time singularity.
\newblock {\em Journal f¨¹r die reine und angewandte Mathematik},
  2015(703):95--113, 2015.

\bibitem{fu2009}
J.-X. Fu, L.-S. Tseng, and S.-T. Yau.
\newblock Local heterotic torsional models.
\newblock {\em Communications in Mathematical Physics}, 289(3):1151--1169,
  2009.

\bibitem{fu2007}
J.-X. Fu and S.-T. Yau.
\newblock A {M}onge-{A}mp\`ere-type equation motivated by string theory.
\newblock {\em Communications in Analysis and Geometry}, 15(1):29--76, 2007.

\bibitem{fu2008}
J.-X. Fu and S.-T. Yau.
\newblock The theory of superstring with flux on non-{K}\"ahler manifolds and
  the complex {M}onge-{A}mp\`ere equation.
\newblock {\em Journal of Differential Geometry}, 78(3):369--428, 2008.

\bibitem{gill2014}
M.~Gill.
\newblock Collapsing of products along the {K}\"ahler-{R}icci flow.
\newblock {\em Transactions of the American Mathematical Society},
  366(7):3907--3924, 2014.

\bibitem{goldstein2004}
E.~Goldstein and S.~Prokushkin.
\newblock Geometric model for complex non-{K}\"ahler manifolds with ${SU}(3)$
  structure.
\newblock {\em Communications in Mathematical Physics}, 251(1):65--78, 2004.

\bibitem{grantcharov2011}
G.~Grantcharov.
\newblock Geometry of compact complex homogeneous spaces with vanishing first
  {C}hern class.
\newblock {\em Advances in Mathematics}, 226(4):3136--3159, 2011.

\bibitem{gray1969}
A.~Gray.
\newblock Vector cross products on manifolds.
\newblock {\em Transactions of the American Mathematical Society},
  141:465--504, 1969.

\bibitem{halmagyi2016}
N.~Halmagyi, D.~Isra\"el, and E.~Svanes.
\newblock The {A}belian heterotic conifold.
\newblock {\em Journal of High Energy Physics}, 2016(7):29, 2016.

\bibitem{hamilton1982}
R.S. Hamilton.
\newblock Three-manifolds with positive {R}icci curvature.
\newblock {\em Journal of Differential Geometry}, 17(2):255--306, 1982.

\bibitem{hamilton1988}
R.S. Hamilton.
\newblock The {R}icci flow on surfaces.
\newblock In {\em Mathematics and General Relativity}, volume~71 of {\em
  Contemporary Mathematics}, pages 237--262. AMS, 1988.

\bibitem{han2011}
Q.~Han and F.-H. Lin.
\newblock {\em Elliptic Partial Differential Equations 2nd Edition}, volume~1
  of {\em Courant Lecture Notes}.
\newblock AMS, 2011.

\bibitem{hitchin1987}
N.J. Hitchin, A.~Karlhede, U.~Lindstr\"om, and M.~Ro\v{c}ek.
\newblock Hyperk\"ahler metrics and supersymmetry.
\newblock {\em Communications in Mathematical Physics}, 108(4):535--589, 1987.

\bibitem{hull1986b}
C.M. Hull.
\newblock Compactifications of the heterotic superstring.
\newblock {\em Physics Letters B}, 178(4):357--364, 1986.

\bibitem{krylov1996}
N.V. Krylov.
\newblock {\em Lectures on Elliptic and Parabolic Equations in H\"older
  Spaces}, volume~12 of {\em Graduate Studies in Mathematics}.
\newblock AMS, 1996.

\bibitem{krylov1980}
N.V. Krylov and M.V. Safonov.
\newblock A certain property of solutions of parabolic equations with
  measurable coefficients.
\newblock {\em Izvestiya Rossiiskoi Akademii Nauk. Seriya Matematicheskaya},
  44(1):161--175, 1980.

\bibitem{krylov1981}
N.V. Krylov and M.V. Safonov.
\newblock A certain property of solutions of parabolic equations with
  measurable coefficients.
\newblock {\em Mathematics of the USSR-Izvestiya}, 16(1):151, 1981.

\bibitem{li2005}
J.~Li and S.-T. Yau.
\newblock The existence of supersymmetric string theory with torsion.
\newblock {\em Journal of Differential Geometry}, 70(1):143--181, 2005.

\bibitem{lieberman1996}
G.M. Lieberman.
\newblock {\em Second Order Parabolic Differential Equations}.
\newblock World Scientific, 1996.

\bibitem{nirenberg1953}
L.~Nirenberg.
\newblock A strong maximum principle for parabolic equations.
\newblock {\em Communications on Pure and Applied Mathematics}, 6(2):167--177,
  1953.

\bibitem{phong2016b}
D.H. Phong, S.~Picard, and X.-W. Zhang.
\newblock The {A}nomaly flow and the {F}u-{Y}au equation.
\newblock {\em arXiv: 1610.02740}, 2016.

\bibitem{phong2016c}
D.H. Phong, S.~Picard, and X.-W. Zhang.
\newblock Anomaly flows.
\newblock {\em arXiv: 1610.02739}, 2016.

\bibitem{phong2016a}
D.H. Phong, S.~Picard, and X.-W. Zhang.
\newblock On estimates for the {F}u-{Y}au generalization of a {S}trominger
  system.
\newblock {\em Journal f\"ur die reine und angewandte Mathematik}, pages 1--32,
  2016.

\bibitem{phong2016d}
D.H. Phong, S.~Picard, and X.-W. Zhang.
\newblock A second order estimate for general complex {H}essian equations.
\newblock {\em Analysis \& PDE}, 9(7):1693--1709, 2016.

\bibitem{phong2017b}
D.H. Phong, S.~Picard, and X.-W. Zhang.
\newblock The {A}nomaly flow on unimodular {L}ie groups.
\newblock {\em arXiv: 1705.09763}, 2017.

\bibitem{phong2017c}
D.H. Phong, S.~Picard, and X.-W. Zhang.
\newblock The {F}u-{Y}au equation with negative slope parameter.
\newblock {\em Inventiones Mathematicae}, 209(2):541--576, 2017.

\bibitem{phong2017}
D.H. Phong, S.~Picard, and X.-W. Zhang.
\newblock Geometric flows and {S}trominger systems.
\newblock {\em Mathematische Zeitschrift}, pages 1--13, 2017.

\bibitem{phong2014}
D.H. Phong, J.~Song, J.~Sturm, and X.-W. Wang.
\newblock The {R}icci flow on the sphere with marked points.
\newblock {\em arXiv: 1407.1118}, 2014.

\bibitem{song2007}
J.~Song and G.~Tian.
\newblock The {K}\"ahler-{R}icci flow on surfaces of positive {K}odaira
  dimension.
\newblock {\em Inventiones Mathematicae}, 170(3):609--653, 2007.

\bibitem{song2012}
J.~Song and G.~Tian.
\newblock Canonical measures and {K}\"ahler-{R}icci flow.
\newblock {\em Journal of the American Mathematical Society}, 25(2):303--353,
  2012.

\bibitem{song2016}
J.~Song and G.~Tian.
\newblock Bounding scalar curvature for global solutions of the
  {K}\"ahler-{R}icci flow.
\newblock {\em American Journal of Mathematics}, 138(3):683--695, 2016.

\bibitem{strominger1986}
A.E. Strominger.
\newblock Superstrings with torsion.
\newblock {\em Nuclear Physics B}, 274(2):253--284, 1986.

\bibitem{struwe2002}
M.~Struwe.
\newblock Curvature flows on surfaces.
\newblock {\em Annali della Scuola Normale Superiore di Pisa-Classe di
  Scienze-Serie V}, 1(2):247--274, 2002.

\bibitem{tian2006}
G.~Tian and Z.~Zhang.
\newblock On the {K}\"ahler-{R}icci flow on projective manifolds of general
  type.
\newblock {\em Chinese Annals of Mathematics, Series B}, 27(2):179--192, 2006.

\bibitem{tosatti2015c}
V.~Tosatti.
\newblock {KAWA} lecture notes on the {K}\"ahler-{R}icci flow.
\newblock {\em arXiv: 1508.04823}, 2015.

\bibitem{tosatti2014}
V.~Tosatti, B.~Weinkove, and X.-K. Yang.
\newblock The {K}\"ahler-ricci flow, {R}icci-flat metrics and collapsing
  limits.
\newblock {\em arXiv: 1408.0161}, 2014.

\bibitem{tosatti2015d}
V.~Tosatti, B.~Weinkove, and X.-K. Yang.
\newblock Collapsing of the {C}hern-{R}icci flow on elliptic surfaces.
\newblock {\em Mathematische Annalen}, 362(3-4):1223--1271, 2015.

\bibitem{tosatti2015e}
V.~Tosatti and Y.-G. Zhang.
\newblock Infinite time singularities of the {K}\"ahler-{R}icci flow.
\newblock {\em Geometry and Topology}, 19(5):2925--48, 2015.

\bibitem{tsuji1988}
H.~Tsuji.
\newblock Existence and degeneration of {K}\"ahler-{E}instein metrics on
  minimal algebraic varieties of general type.
\newblock {\em Mathematische Annalen}, 281(1):123--133, 1988.

\bibitem{tsuji1994}
H.~Tsuji.
\newblock Degenerate {M}onge-{A}mp\`ere equation in algebraic geometry.
\newblock In {\em Miniconference on Analysis and Applications (Brisbane,
  1993)}, volume~33 of {\em Proceedings of the Centre for Mathematical
  Analysis}, pages 209--224. Australian National University, 1994.

\bibitem{yau1978}
S.-T. Yau.
\newblock On the {R}icci curvature of a compact {K}\"ahler manifold and the
  complex {M}onge-{A}mp\`ere equation, {I}.
\newblock {\em Communications on Pure and Applied Mathematics}, 31(3):339--411,
  1978.

\bibitem{zhang2017}
Y.-S. Zhang.
\newblock Collapsing limits of the {K}\"ahler-{R}icci flow and the continuity
  method.
\newblock {\em arXiv: 1705.01434}, 2017.

\bibitem{zheng2017}
T.~Zheng.
\newblock The {C}hern-{R}icci flow on {O}eljeklaus-{T}oma manifolds.
\newblock {\em Canadian Journal of Mathematics}, 69(1):220--240, 2017.

\end{thebibliography}

\bigskip
Department of Mathematics, Columbia University, New York, NY 10027, USA

\smallskip
tfei@math.columbia.edu, zjhuang@math.columbia.edu, picard@math.columbia.edu

\end{document}